
\documentclass{amsart}

\usepackage{amsfonts}
\usepackage{amsmath}
\usepackage{graphicx}

\setcounter{MaxMatrixCols}{10}
%TCIDATA{OutputFilter=LATEX.DLL}
%TCIDATA{Version=5.00.0.2570}
%TCIDATA{<META NAME="SaveForMode" CONTENT="1">}
%TCIDATA{Created=Monday, December 06, 2010 16:37:53}
%TCIDATA{LastRevised=Wednesday, March 02, 2022 12:28:37}
%TCIDATA{<META NAME="GraphicsSave" CONTENT="32">}
%TCIDATA{<META NAME="DocumentShell" CONTENT="Articles\SW\AMS Journal Article">}
%TCIDATA{Language=American English}
%TCIDATA{CSTFile=amsartci.cst}

\graphicspath{{./FatButterfly.png/}}
\newtheorem{theorem}{Theorem}

\newtheorem{corollary}{Corollary}

\newtheorem{definition}{Definition}
\newtheorem{example}{Example}

\newtheorem{lemma}{Lemma}

\newtheorem{proposition}{Proposition}
\newtheorem{remark}{Remark}

\numberwithin{equation}{section}

\begin{document}
\title{A Normal Graph Algebra}
\author{Harold N. Ward}
\address{Department of Mathematics\\
University of Virginia\\
Charlottesville, VA 22904\\
USA}
\email{hnw@virginia.edu}
\subjclass[2020]{05C25, 05C50}
\keywords{Graph, nonassociative algebra, normal algebra.\\
}

\begin{abstract}
We define a normal graph algebra modeled on algebras used in genetics.
Although the algebra does not always determine its graph, it often
highlights special features. After developing basic properties of the
algebra, we examine those of certain minimal graphs. We then apply the
results to the Petersen graph, finding connections between some of its many
aspects. For example, the outer automorphisms of $\mathrm{Sym}(6)$ emerge
naturally. The normal algebra of the Petersen graph is unique among normal
graph algebras.
\end{abstract}

\maketitle

\section{Introduction\label{SectIntro}}

Algebraic method s play a prominent role in the investigation of graph
properties. Many of these stem from the creation of an algebra from the
graph, such as the algebra of polynomials in the adjacency matrix. In \cite%
{CG} and \cite{GC}, the authors present a commutative nonassociative algebra
defined by the incidence properties of a graph. It is inspired by Bernstein
algebras introduced in genetic studies (for a survey of that topic, see \cite%
{R}). This Bernstein graph algebra determines the graph itself \cite{GC, W}.
In the present paper, we define another nonassociative algebra from a graph
that is a good bit simpler. It does not always determine the graph; for
example, all trees of a given order have isomorphic algebras. But it seems
to encapsulate certain graph properties. It leads to aspects of the graph in
a natural way, sometimes by means of the automorphism group of the algebra.

\subsection{Normal algebras}

Following the lead in \cite{GLM}, we define a \emph{normal algebra}:

\begin{definition}
\label{DefNormal}\emph{A normal algebra consists of a finite dimensional
vector space }$\mathcal{N}$\emph{\ over a field }$\mathbb{F}$\emph{\ of
characteristic not 2, endowed with a product }$(a,b)\longrightarrow ab$. 
\emph{(We may write }$a\times b$\emph{\ if juxtaposition is confusing.) The
product has the following properties:}
\end{definition}

\begin{enumerate}
\item Bilinearity: for $a,b,a^{\prime },b^{\prime }$ in $\mathcal{N}$ and $%
\alpha ,\beta ,\alpha ^{\prime },\beta ^{\prime }$ in $\mathbb{F}$,%
\[
(\alpha a+\beta b)(\alpha ^{\prime }a^{\prime }+\beta ^{\prime }b^{\prime
})=\alpha \alpha ^{\prime }(aa^{\prime })+\alpha \beta ^{\prime }(ab^{\prime
})+\beta \alpha ^{\prime }(ba^{\prime })+\beta \beta ^{\prime }(bb^{\prime
}). 
\]

\item Commutativity: $ab=ba$.

\item Grading: $\mathcal{N}$ is the direct sum of two subspace $U$ and $%
\mathfrak{Z}$ for which%
\begin{eqnarray*}
u\mathfrak{z} &=&0\text{ for }u\in U,\,\mathfrak{z}\in \mathfrak{Z}; \\
\mathfrak{zz}^{\prime } &=&0\text{ for }\mathfrak{z},\,\mathfrak{z}^{\prime
}\in \mathfrak{Z}; \\
uu^{\prime } &\in &\mathfrak{Z}\text{ for }u,u^{\prime }\in U.
\end{eqnarray*}
\end{enumerate}

\noindent This is the framework presented in \cite{GLM}. Bernstein algebras
are not necessarily associative: $a(bc)$ might not equal $(ab)c$. (Such an
algebra is traditionally called a nonassociative algebra, even though it
might accidentally \emph{be} associative.) However, normal algebras are
trivially associative, since any three-fold product is automatically $0$.
They are also special Jordan algebras, the algebra itself providing the
associative algebra for which $ab=\frac{1}{2}(ab+ba)$. (Chapter IV of the
standard reference on nonassociative algebra \cite{SNA} presents Jordan
algebras.) None of these extras is used in this paper.

Needed technical aspects of normal algebras will be given on the spot. For
example, a homomorphism of one algebra $\mathcal{A}$ to another $\mathcal{A}%
^{\prime }$ is a linear transformation $\varphi :\mathcal{A}\longrightarrow 
\mathcal{A}^{\prime }$ for which $\varphi (ab)=\varphi (a)\varphi (b)$. If $%
\mathcal{A}$ and $\mathcal{A}^{\prime }$ are normal, $\varphi $ is also
required to map the $U$- and $\mathfrak{Z}$-subspaces of $\mathcal{A}$ into
the respective subspaces of $\mathcal{A}^{\prime }$.

If $X$ is a subset of an $\mathbb{F}$-vector space, the \emph{span} $%
\left\langle X\right\rangle $ of $X$ is the set of $\mathbb{F}$-linear
combinations of the members of $X$, shortened to $\left\langle
x\right\rangle $ when $X=\left\{ x\right\} $. Two members $u$ and $v$ of a
vector space are called \emph{proportional }if $\left\langle u\right\rangle
=\left\langle v\right\rangle $, and we often indicate this by $u\approx
v$. Finally, for a vector space $V$, $\widehat{V}$ is its dual space of $%
\mathbb{F}$-linear functionals on $V$.

\section{Normal graph algebras\label{SectNGA}}

Let $G$ be a finite simple graph, one with no loops or multiple edges. The
set of vertices of $G$ is $VG$ and $EG$ is the set of edges. The \emph{order}
of $G$ is $p=\left\vert VG\right\vert $ and the \emph{size} is $q=\left\vert
EG\right\vert $; these notations will hold throughout. (For both basic
graph-theoretic concepts and topics in algebraic graph theory, see \cite{GR}%
.) Edges are two-element subsets of $VG$. Instead of the conventional
notations $\{x,y\}$ or $xy$ for the edge with vertices $x$ and $y$, we write 
$\left[ x,y\right] $, especially in the algebra frame-work where we see $xy=%
\left[ x,y\right] $. The vertices of an edge $[x,y]$ are called \emph{%
adjacent}, denoted $x\sim y$. Different edges are also called adjacent
when they share a vertex.

The \emph{normal graph algebra} $\mathcal{N}G$ of $G$ has for its $U$%
-subspace, $U_{G}$, the space of formal $\mathbb{F}$-linear combinations of
the vertices, and for its $\mathfrak{Z}$-subspace, $\mathfrak{Z}_{G}$, the
formal $\mathbb{F}$-linear combinations of the edges. Thus $%
U_{G}=\left\langle VG\right\rangle $ and $\mathfrak{Z}_{G}=\left\langle
EG\right\rangle $. The product in $\mathcal{N}G$ is defined for the basis
elements, the vertices and edges, and extended through bilinearity. (The
construction is similar to that in \cite[Section 2]{CG}. It also resembles
the definition of a group ring; the exposition in \cite[p. 172]{CvL} may be
helpful.) Specifically, if $x$ and $y$ are vertices,%
\begin{equation}
\begin{tabular}{l}
$xy\,(=x\times y)=\left\{ 
\begin{array}{c}
\left[ x,y\right] \text{ if }x\neq y\text{ and }x\sim y, \\ 
0\text{ if }x\neq y\text{ and }x\not \sim y.%
\end{array}%
\right. $ \\ 
$x^{2}=\sum \left[ x,y\right] $, summed over the $y$ adjacent to $x.$%
\end{tabular}
\label{ProdDef}
\end{equation}%
All other basis products, such as $x\times \mathfrak{e}$, $x\in VG,\,%
\mathfrak{e}\in EG$, are $0$, in line with the definition \ref{DefNormal}.
Let two members of $U_{G}$ be $u=\sum_{x\in VG}\theta _{x}x\,$and $%
v=\sum_{x\in VG}\eta _{x}x$. Then%
\[
uv=\left( \sum_{x\in VG}\theta _{x}x\right) \left( \sum_{x\in VG}\eta
_{x}x\right) =\sum_{x,y\in VG}\theta _{x}\eta _{y}xy. 
\]%
An edge $[a,b]$ appears in the last sum four times: in $a^{2},\,ab,\,ba$,
and $b^{2}$. The contributions from these terms are 
\[
\begin{tabular}{cccc}
$a^{2}$ & $ab$ & $ba$ & $b^{2}$ \\ 
$\theta _{a}\eta _{a}$ & $\theta _{a}\eta _{b}$ & $\eta _{b}\theta _{a}$ & $%
\theta _{b}\eta _{b}$%
\end{tabular}%
. 
\]%
Their sum is $\theta _{a}\eta _{a}+\theta _{a}\eta _{b}+\eta _{b}\theta
_{a}+\theta _{b}\eta _{b}=(\theta _{a}+\theta _{b})(\eta _{a}+\eta _{b})$.
Thus%
\begin{equation}
uv=\sum_{\left[ x,y\right] \in EG}(\theta _{x}+\theta _{y})(\eta _{x}+\eta
_{y})[x,y]\text{ and }u^{2}=\sum_{\left[ x,y\right] \in EG}(\theta
_{x}+\theta _{y})^{2}[x,y].  \label{ProdCoef}
\end{equation}

\begin{example}
\label{ExampleTrees}\emph{Here are two trees: let }$T_{1}$\emph{\ be the }%
path\emph{\ of order }$4$\emph{\ with vertices }$a,b,c,d$\emph{\ and edges}%
\[
\mathfrak{e}=[a,b],\,\mathfrak{f}=[b,c],\,\mathfrak{g}=[c,d]. 
\]%
\emph{Let }$T_{2}$\emph{\ be the }claw\emph{\ with the same vertices, but
edges}%
\[
\mathfrak{e}=[a,b],\,\mathfrak{f}=[a,c],\,\mathfrak{g}=[a,d]. 
\]%
\emph{First define elements }$u_{0},\,u_{1},\,u_{2},\,u_{3}$\emph{\ in }$%
T_{1}$\emph{\ and }$T_{2}$\emph{\ by}%
\[
\begin{tabular}{ccccc}
& $u_{0}=$ & $u_{1}=$ & $u_{2}=$ & $u_{3}=$ \\ 
$T_{1}$ & $a-b+c-d$ & $a$ & $\frac{1}{2}(a-b-c+d)$ & $d$ \\ 
$T_{2}$ & $a-b-c-d$ & $b$ & $c$ & $d$%
\end{tabular}%
. 
\]%
\emph{The }$u_{i}$\emph{\ form bases of the two }$U$\emph{-spaces. As a
computational example, expanding out }$u_{0}u_{2}\ $\emph{in }$T_{1}$ \emph{%
directly from the definition (\ref{ProdDef})\ gives}%
\begin{eqnarray*}
u_{0}u_{2} &=&\frac{1}{2}(a-b+c-d)(a-b-c+d) \\
&=&\frac{1}{2}(a^{2}-ab-ac+ad-ba+b^{2}+bc-bd \\
&&+ca-cb-c^{2}+cd-da+db+dc-d^{2}) \\
&=&\frac{1}{2}(\mathfrak{e-e-}0+0-\mathfrak{e+e+f+f-}0 \\
&&+0-\mathfrak{f-f-g+g-}0+0+\mathfrak{g-g}).
\end{eqnarray*}%
\emph{The whole-scale cancelling shows that }$u_{0}u_{2}=0.$ \emph{The
computation of }$u_{0}u_{2}$ \emph{is much easier when done by the product
formulas (\ref{ProdCoef}), because for each edge }$\left[ a,b\right] ,\,%
\left[ b,c\right] ,\,\left[ c,d\right] $ of $\mathcal{T}_{1}$\emph{, the
sums }$\theta _{x}+\theta _{y}$\emph{\ for }$u_{0}$ \emph{are all }$0$\emph{.%
}

\emph{The algebras }$\mathcal{N}T_{1}$\emph{\ and }$\mathcal{N}T_{2}$\emph{\
have the same product table in terms of the }$u_{i}$\emph{\ and the edges:}%
\[
\begin{tabular}{lllll}
& $u_{0}$ & $u_{1}$ & $u_{2}$ & $u_{3}$ \\ 
$u_{0}$ & $0$ & $0$ & $0$ & $0$ \\ 
$u_{1}$ & $0$ & $\mathfrak{e}$ & $0$ & $0$ \\ 
$u_{2}$ & $0$ & $0$ & $\mathfrak{f}$ & $0$ \\ 
$u_{3}$ & $0$ & $0$ & $0$ & $\mathfrak{g}$%
\end{tabular}%
. 
\]%
\emph{Hence $\mathcal{N}T_{1}\emph{\ }$and$\mathcal{N}T_{2}\emph{\ }$are
isomorphic, an illustration of the introductory comment about trees.}
\end{example}

The normal graph algebra of a single vertex with no edges is the
one-dimensional $\mathbb{F}$-algebra $\mathbb{F}^{0}$ with all products $0$. 
$\mathbb{F}^{0}$ is a direct summand of the normal graph algebra of any
bipartite graph. In general, $\mathcal{N}(G\cup H)$ is isomorphic to the
direct sum of $\mathcal{N}G$ and $\mathcal{N}H$. For a connected bipartite
graph of order at least 2, that $\mathbb{F}^{0}$ of a bipartite graph is not
the algebra of a subgraph.

\section{Short homomorphisms}

The \emph{short algebra} is the graph algebra of the \emph{spline} graph, $S$%
, that has one vertex, $r$, and one edge, $\mathfrak{s}$, with $r$ the sole
endpoint of $\mathfrak{s}$. This spline algebra $\mathcal{S}$ has the single
defining relation $r^{2}=\mathfrak{s}$. For $\mathcal{S}$, $%
U_{S}=\left\langle r\right\rangle $, the one-dimensional subspace spanned by 
$r$, and $\mathfrak{Z}_{S}=\left\langle \mathfrak{s}\right\rangle $. A \emph{%
short homomorphism} $\varphi $ from $\mathrm{alg}G$ (or any normal algebra)
to $\mathcal{S}$ is a normal homomorphism, meaning that in addition to the
standard algebra homomorphism rule that $\varphi $ is a linear
transformation for which $\varphi (mn)=\varphi (m)\varphi (n)$ when $m,n\in 
\mathrm{alg}G$, we also have that $\varphi $ sends $U_{G}$ into $U_{S}$ and $%
\mathfrak{Z}_{G}$ into $\mathfrak{Z}_{S}$. This implies that $\varphi $ is
described by two linear functionals, $\lambda $ and $\mu $, for which $%
\varphi (u)=\lambda (u)r$ and $\varphi (\mathfrak{z})=\mu (\mathfrak{z})s$,
where $u\in U_{G}$ and $\mathfrak{z}\in \mathfrak{Z}_{G}$. The homomorphism
rule for $\varphi $ that $\varphi (u)\varphi (v)=\varphi (uv)$ for all $%
u,v\in U_{G}$ is equivalent to the demand that $\mu (uv)=\lambda (u)\lambda
(v)$. Given a short homomorphism $\varphi $, we get $\lambda $ and $\mu $
satisfying the last equation. If we were just presented with a $\lambda $
and a $\mu $ and define $\varphi $ by $\varphi (u)=\lambda (u)r$ and $%
\varphi (\mathfrak{z})=\mu (\mathfrak{z})s$, then to verify that $\varphi $
is a short homomorphism, we would need to show that $\mu (uv)=\lambda
(u)\lambda (v)$ for all $u,v\in U_{G}$.

However, by the linearity of the functionals, this last requirement for
producing a short homomorphism from $\lambda $ and $\mu $ just has to be
checked when $u$ and $v$ are taken to be vertices $x$ and $y$. If $x\neq y$,
then $xy=0$ if $x$ and $y$ are not adjacent. So in that case, $\mu (xy)=0$,
and then $\lambda (x)\lambda (y)=0$. That is, one or both of $\lambda
(x),\lambda (y)$ must be $0$. If $x$ and $y$ are adjacent, then $xy=[x,y]$
and $\mu ([x,y])=\lambda (x)\lambda (y)$. Furthermore, since $x^{2}=\sum
[x,t]$ summed over the vertices $t$ adjacent to $x$, $\mu (x^{2})=\sum \mu
([x,t])$. Substituting from the previous equations gives $\lambda
(x)^{2}=\sum \lambda (x)\lambda (t)$. If $\lambda (x)=0$, this is
automatically true. But if $\lambda (x)\neq 0$, we can divide through and
get $\lambda (x)=\sum \lambda (t)$. Thus the requirements on $\lambda $ and $%
\mu $ are:

\begin{equation}
\begin{tabular}{l}
1. If $x$ and $y$ are nonadjacent vertices, at least one of $\lambda (x)$
and $\lambda (y)$ is $0$. \\ 
2. If $\lambda (x)\neq 0$, then $\lambda (x)=\sum \lambda (t)$, summed over
the vertices $t$ adjacent to $x$. \\ 
3. If $x$ and $y$ are adjacent vertices, then $\mu ([x,y])=\lambda
(x)\lambda (y)$.%
\end{tabular}
\label{ShortReq}
\end{equation}

\noindent Call such a $\lambda $ a \emph{short functional} (always assumed
nonzero). Given $\lambda \neq 0$ satisfying these conditions and then $\mu $
from the third one (extended linearly to all of $\mathfrak{Z}_{G}$), the map 
$\varphi $ defined by $\varphi (u+z)=\lambda (u)r+\mu (z)s$ will be a short
homomorphism on $\mathrm{alg}G$. Scaling such a $\lambda $ by $\alpha \neq 0$
will produce a short homomorphism also; it is not a scalar multiple of the $%
\varphi $ for $\lambda $ because $\mu $ is scaled by $\alpha ^{2}$. Call it
a scaled version of $\varphi $ nevertheless, and indicate the collection of
these \textquotedblleft scalings\textquotedblright\ of $\varphi $ by $%
\left\langle \varphi \right\rangle $. We also collect the scalar multiples
of $\lambda $ (including $0$) into the set $\left\langle \lambda
\right\rangle $. This is a one-dimensional subspace of $\widehat{U_{G}}$,
the dual space of $U_{G}$, and so a point of the projective space $P\widehat{%
U_{G}}$, the set of one-dimensional subspaces of $\widehat{U_{G}}$.

\section{Edges and short homomorphisms}

There is a useful relation between short homomorphisms and edges. Begin with
an edge $\mathfrak{e}=[a,b]$ of $G$. Let $\lambda _{\mathfrak{e}}\in 
\widehat{U_{G}}$ be defined as follows: $\lambda _{\mathfrak{e}}(a)=\lambda
_{\mathfrak{e}}(b)=1$ and $\lambda _{\mathfrak{e}}(c)=0$ for all vertices
other than $a$ and $b$. Then extend $\lambda $ linearly to all of $U_{G}$
(so $\lambda _{\mathfrak{e}}(\alpha a+\beta b+\ldots )=\alpha +\beta +\ldots 
$). Next, let $\mu _{\mathfrak{e}}([a,b])=1$ and let $\mu _{\mathfrak{e}%
}([x,y])=0$ for all other edges $[x,y]$ of $G$; again, extend this to all of 
$\mathfrak{Z}_{G}$ by linearity. It is easy to verify that $\lambda _{%
\mathfrak{e}}$ satisfies the requirements (\ref{ShortReq}) for producing a
short homomorphism. For example, the second requirement just comes down to $%
\lambda _{\mathfrak{e}}(a)=\lambda _{\mathfrak{e}}(b)$, the only case
needing examination.

What is noteworthy is that, up to scalars, these are the \emph{only} short
functionals. To see this, let $\lambda $ be a short functional and let $K$
be the set of vertices $x$ for which $\lambda (x)\neq 0$. If $x,y\in K$ and $%
x\neq y$, then as $\mu (xy)=\lambda (x)\lambda (y)$ and neither factor is $0$%
, $\mu (xy)\neq 0$. That means $[x,y]$ is an edge. Thus any two members of $K
$ are adjacent, so that $K$ is a \emph{clique} in $G$. By the summation
condition 2 in (\ref{ShortReq}), when $x\in K$, then $\lambda (x)=\sum
\lambda (t)$, summed over the $t$ adjacent to $x$. The only $t$ adjacent to $%
x$ that contribute are the members in $K$, since the ones outside $K$ have $%
\lambda (t)=0$. So $\lambda (x)=\sum_{t\in K-\{x\}}\lambda (t)$. Add $%
\lambda (x)$ to both sides to get $2\lambda (x)=\sum_{t\in K}\lambda (t)$.
The right side doesn't depend on $x$; call it $\kappa $: $\lambda (x)=\kappa
/2$ for all $x\in K$. Then all the $\lambda (t)$ in $\sum_{t\in K}\lambda (t)
$ are $k/2$, and we get $\kappa =|K|\times \kappa /2$. This has to be
interpreted as an equation in $\mathbb{F}$; simplified, it reads $|K|=2$.
But we can't conclude that $|K|$ is \emph{numerically} $2$ unless either $%
|K|<2+\mathrm{char}\mathbb{F}$ or $\mathrm{char}\mathbb{F}=0$. That will
certainly be the case when $\mathrm{char}\mathbb{F}\neq 0$ if the number of
vertices $p$ of $G$ has $p<2+\mathrm{char}\mathbb{F}$. If $G$ has no
complete subgraphs of order at least $2+\mathrm{char}\mathbb{F}$ when $%
\mathrm{char}\mathbb{F}\neq 0$, then we can still conclude that $|K|=2$. So
we'll restrict $\mathbb{F}$ either to have characteristic $0$ or to have $%
p<2+\mathrm{char}\mathbb{F}$.

That assumption being made, we conclude that $|K|$ is really $2$. This means 
$K$ is the pair of vertices $a,b$ of some edge, $[a,b]$. As we can scale $%
\lambda $, we may take $\lambda (a)=\lambda (b)=1$ and all other vertex
values $\lambda (c)=0$. Then the only edge having nonzero $\mu $-value is $%
[a,b]$. Thus we have exactly the recipe for the short homomorphism
corresponding to $[a,b]$. In summary:

\begin{theorem}
Let $\mathbb{F}$ be restricted by having either $\mathrm{char}\mathbb{F}=0$
or $p<2+\mathrm{char}\mathbb{F}$, $p$ the order of graph $G$. Then up to
scalars, the short functionals $\lambda $ are exactly those produced from
each edge $[a,b]$ by the assignment $\lambda (a)=\lambda (b)=1$, with all
other vertex values $\lambda (c)=0$.
\end{theorem}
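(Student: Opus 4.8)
The plan is to prove both directions: first that each edge yields a short functional of the stated form, and then that every short functional arises this way up to scaling. For the first direction I would take an edge $\mathfrak{e}=[a,b]$, define $\lambda_{\mathfrak{e}}$ by $\lambda_{\mathfrak{e}}(a)=\lambda_{\mathfrak{e}}(b)=1$ and $\lambda_{\mathfrak{e}}(c)=0$ otherwise, extend linearly, and then simply check the three conditions in~(\ref{ShortReq}). Condition~1 holds because the only vertices with nonzero $\lambda_{\mathfrak{e}}$-value are $a,b$, which are adjacent; condition~2 needs checking only at $a$ and $b$, where it reduces to $\lambda_{\mathfrak{e}}(a)=\lambda_{\mathfrak{e}}(b)$ since the only neighbor of $a$ with nonzero value is $b$ and vice versa; condition~3 then \emph{defines} the accompanying $\mu_{\mathfrak{e}}$. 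This part is routine and uses only the combinatorial setup.

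For the converse I would start from an arbitrary short functional $\lambda$ and introduce $K=\{x\in VG:\lambda(x)\neq 0\}$. The first step is to show $K$ is a clique: if $x,y\in K$ with $x\neq y$ were nonadjacent, condition~1 of~(\ref{ShortReq}) would force one of $\lambda(x),\lambda(y)$ to vanish, a contradiction. The second step is to show $\lambda$ is constant on $K$: applying condition~2 at each $x\in K$ and discarding the neighbors outside $K$ (which contribute $0$) gives $\lambda(x)=\sum_{t\in K\setminus\{x\}}\lambda(t)$; adding $\lambda(x)$ to both sides yields $2\lambda(x)=\sum_{t\in K}\lambda(t)=:\kappa$, a quantity independent of $x$, so $\lambda(x)=\kappa/2$ for every $x\in K$. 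Since $\kappa\neq 0$ (because $\lambda\neq 0$ on $K$ and $\mathrm{char}\,\mathbb{F}\neq 2$), substituting back gives $\kappa=|K|\cdot(\kappa/2)$, i.e.\ the relation $|K|=2$ holding in $\mathbb{F}$.

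The crux of the argument — and the step I expect to be the main obstacle — is turning the equation ``$|K|=2$ in $\mathbb{F}$'' into the genuine conclusion that $|K|$ is the integer $2$. Here is exactly where the hypothesis on $\mathbb{F}$ enters: if $\mathrm{char}\,\mathbb{F}=0$ the implication is immediate, and if $\mathrm{char}\,\mathbb{F}=\ell>0$ with $p<2+\ell$, then since $K$ is a clique we have $2\le|K|\le p<2+\ell$, so the only residue class $\bmod\ \ell$ in the range $[2,p]$ congruent to $2$ is $2$ itself, forcing $|K|=2$. (Without such a restriction a clique of size $2+\ell$ could produce a spurious short functional supported on more than two vertices.) Once $|K|=2$ is established, I would finish by noting $K=\{a,b\}$ for some edge $[a,b]=\mathfrak{e}$, rescaling $\lambda$ so that $\lambda(a)=\lambda(b)=1$ (and hence $\lambda(c)=0$ for all other $c$), and observing that condition~3 then forces $\mu$ to be supported on $\mathfrak{e}$ alone with $\mu(\mathfrak{e})=1$; thus $\lambda$ is, up to scalar, exactly $\lambda_{\mathfrak{e}}$, completing the classification.
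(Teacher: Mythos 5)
Your proposal is correct and follows essentially the same route as the paper's own argument: verify directly that each edge functional satisfies the requirements (\ref{ShortReq}), then for an arbitrary short functional show its support $K$ is a clique on which $\lambda$ is constant, derive the relation $|K|=2$ as an equation in $\mathbb{F}$, and invoke the characteristic hypothesis to conclude that $|K|$ is the integer $2$. Your explicit observations that $\kappa \neq 0$ (justifying the division) and that the bound $p<2+\mathrm{char}\,\mathbb{F}$ pins down the residue class are only slightly more detailed than the paper's treatment, not a different method.
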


With $u=\sum_{x\in VG}\theta _{x}x$, $\lambda _{\mathfrak{e}}(u)=\sum_{x\in
VG}\lambda _{\mathfrak{e}}(x)$. Then if $\mathfrak{e}=[x,y]$, $\lambda _{%
\mathfrak{e}}(u)=\theta _{x}+\theta _{y}$. Thus in the product equation (\ref%
{ProdCoef}),%
\begin{equation}
uv=\sum_{\mathfrak{e}}\lambda _{\mathfrak{e}}(u)\lambda _{\mathfrak{e}}(v)%
\text{ and }u^{2}=\sum_{\mathfrak{e}}\lambda _{\mathfrak{e}}(u)^{2}.
\label{EqnProdLambda}
\end{equation}

\section{Weights\label{SectWt}}

In this section we assign weights to the members of $\mathfrak{Z}$ by using
short homomorphisms. For future use, let $\Lambda $ be a collection of
nonzero scalar multiples of the $\lambda _{\mathfrak{e}}$, one for each edge 
$\mathfrak{e}$, and $\mathrm{M}$ a similar set for the $\mu _{\mathfrak{e}}$.

\begin{definition}
\label{DefWt}\emph{Let }$\mathfrak{z}\in \mathfrak{Z}_{G}$\emph{. The }%
support $\mathrm{supp}\left( \mathfrak{z}\right) $\emph{\ is the set of
edges appearing with a nonzero coefficient when }$\mathfrak{z}$\emph{\ is
written as a linear combination of edges. The }weight $\mathrm{wt}\left( 
\mathfrak{z}\right) $\emph{\ of }$\mathfrak{z}$\emph{\ is the size }$%
\left\vert \mathrm{supp}\left( \mathfrak{z}\right) \right\vert $.
\end{definition}

Since $\mu _{\mathfrak{e}}(\mathfrak{f})=0$ for any edge $\mathfrak{f}$
other than $\mathfrak{e}$, $\mathrm{wt}\left( \mathfrak{z}\right) $ is the
number of $\mu _{\mathfrak{e}}$ for which $\mu _{\mathfrak{e}}(\mathfrak{z}%
)\neq 0$. That count would be the same if we used functionals proportional
to the $\mu _{\mathfrak{e}}$. Hence

\begin{lemma}
\label{LemWt}For $\mathfrak{z}\in \mathfrak{Z}_{G}$, $\mathrm{wt}\left( 
\mathfrak{z}\right) $ is the number of members $\mu $ of $\mathrm{M}$ for
which $\mu (\mathfrak{z})\neq 0$.
\end{lemma}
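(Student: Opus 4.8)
The plan is to reduce everything to the dual-basis behaviour of the functionals $\mu_{\mathfrak{e}}$. First I would write $\mathfrak{z}=\sum_{\mathfrak{f}\in EG}c_{\mathfrak{f}}\mathfrak{f}$ in the edge basis of $\mathfrak{Z}_{G}$; by Definition \ref{DefWt}, $\mathrm{supp}(\mathfrak{z})$ is the set of edges $\mathfrak{f}$ with $c_{\mathfrak{f}}\neq 0$, and $\mathrm{wt}(\mathfrak{z})$ is the cardinality of that set. Next I would recall that $\mu_{\mathfrak{e}}$ was defined by $\mu_{\mathfrak{e}}(\mathfrak{e})=1$ and $\mu_{\mathfrak{e}}(\mathfrak{f})=0$ for every edge $\mathfrak{f}\neq\mathfrak{e}$, and extended linearly; evaluating this functional on the expansion of $\mathfrak{z}$ therefore gives $\mu_{\mathfrak{e}}(\mathfrak{z})=c_{\mathfrak{e}}$. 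Hence $\mu_{\mathfrak{e}}(\mathfrak{z})\neq 0$ precisely when $\mathfrak{e}\in\mathrm{supp}(\mathfrak{z})$, so the number of edges $\mathfrak{e}$ with $\mu_{\mathfrak{e}}(\mathfrak{z})\neq 0$ equals $\mathrm{wt}(\mathfrak{z})$.

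It then remains to pass from the specific functionals $\mu_{\mathfrak{e}}$ to an arbitrary choice $\mathrm{M}$. By construction $\mathrm{M}$ contains, for each edge $\mathfrak{e}$, exactly one functional $\mu=\alpha_{\mathfrak{e}}\mu_{\mathfrak{e}}$ with $\alpha_{\mathfrak{e}}\in\mathbb{F}$ nonzero. Since $\mathbb{F}$ is a field, $\mu(\mathfrak{z})=\alpha_{\mathfrak{e}}\mu_{\mathfrak{e}}(\mathfrak{z})\neq 0$ if and only if $\mu_{\mathfrak{e}}(\mathfrak{z})\neq 0$. The map $\mu\mapsto\mathfrak{e}$ is a bijection between $\mathrm{M}$ and $EG$, and it restricts to a bijection between the members of $\mathrm{M}$ that do not annihilate $\mathfrak{z}$ and the edges $\mathfrak{e}$ with $\mu_{\mathfrak{e}}(\mathfrak{z})\neq 0$; by the previous paragraph the latter set has size $\mathrm{wt}(\mathfrak{z})$, which proves the lemma.

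There is essentially no obstacle here: this is a bookkeeping argument. The only two points needing a moment of care are the one-to-one correspondence between $\mathrm{M}$ and the edge set (so that no edge is double-counted or omitted, which is exactly the ``one for each edge'' clause in the definition of $\mathrm{M}$) and the elementary fact that rescaling a linear functional by a nonzero field element does not change whether it vanishes on $\mathfrak{z}$.
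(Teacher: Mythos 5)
Your proof is correct and follows essentially the same route as the paper: observe that the $\mu_{\mathfrak{e}}$ form the dual basis to the edges, so $\mu_{\mathfrak{e}}(\mathfrak{z})$ recovers the coefficient of $\mathfrak{e}$, and then note that replacing each $\mu_{\mathfrak{e}}$ by a nonzero scalar multiple does not change which ones vanish on $\mathfrak{z}$. The paper compresses this into two sentences preceding the lemma statement; your version merely makes the bookkeeping explicit.
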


\noindent The edges themselves are proportional to the $\mathfrak{z}\in 
\mathfrak{Z}_{G}$ for which $\mathrm{wt}\left( \mathfrak{z}\right) =1$.
Consequently we can determine the edges, to scalars, entirely from the
structure of $\mathcal{N}G$.

By equation (\ref{EqnProdLambda}), for $u,v\in U_{G}$,%
\[
\mathrm{supp}\left( u^{2}\right) =\left\{ \mathfrak{e}\in EG|\lambda _{%
\mathfrak{e}}(u)\neq 0\right\} \text{ and }\mathrm{supp}\left( v^{2}\right)
=\left\{ \mathfrak{e}\in EG|\lambda _{\mathfrak{e}}(v)\neq 0\right\} . 
\]%
Then it also follows from (\ref{EqnProdLambda}) that%
\begin{equation}
\mathrm{supp}\left( uv\right) =\mathrm{supp}\left( u^{2}\right) \mathbf{\cap 
}\mathrm{supp}\left( v^{2}\right) .  \label{EqnSprt}
\end{equation}

\subsection{The annihilator}

The definition of $\mathcal{N}G$ implies that $\mathfrak{Z}_{G}$ annihilates
the algebra. Modifying the annihilator concept a bit, we let $\mathrm{ann}G$%
, the \emph{annihilator} of $\mathcal{N}G$, mean just the set of $u\in U_{G}$
for which $uU_{G}=0$.

\begin{proposition}
\label{PropAnn}The annihilator $\mathrm{ann}G$ is the set of $u\in U_{G}$
for which $u^{2}=0$. It is also the intersection $\mathbf{\cap }_{\lambda
\in \Lambda }\ker \lambda $ of the kernels of the $\lambda \in \Lambda $.
\end{proposition}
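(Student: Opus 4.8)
The plan is to prove the two asserted descriptions of $\mathrm{ann}G$ in turn, exploiting the product formula (\ref{EqnProdLambda}) and the support identity (\ref{EqnSprt}).

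First I would show $\mathrm{ann}G = \{u \in U_G : u^2 = 0\}$. One inclusion is immediate: if $uU_G = 0$ then in particular $u^2 = u\cdot u = 0$. For the converse, suppose $u^2 = 0$. By (\ref{EqnProdLambda}), $u^2 = \sum_{\mathfrak{e}} \lambda_{\mathfrak{e}}(u)^2 \,\mathfrak{e}$, and since the edges form a basis of $\mathfrak{Z}_G$, $u^2 = 0$ forces $\lambda_{\mathfrak{e}}(u)^2 = 0$, hence $\lambda_{\mathfrak{e}}(u) = 0$, for every edge $\mathfrak{e}$ (here the field has characteristic not $2$, so squaring is injective on $\mathbb{F}$, though in fact we only need that $\mathbb{F}$ has no nilpotents, which holds for any field). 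Then for any $v \in U_G$, the product formula gives $uv = \sum_{\mathfrak{e}} \lambda_{\mathfrak{e}}(u)\lambda_{\mathfrak{e}}(v)\,\mathfrak{e} = 0$, so $u \in \mathrm{ann}G$. Alternatively one can invoke (\ref{EqnSprt}): $\mathrm{supp}(uv) = \mathrm{supp}(u^2) \cap \mathrm{supp}(v^2) \subseteq \mathrm{supp}(u^2) = \varnothing$.

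Next I would identify this set with $\bigcap_{\lambda \in \Lambda} \ker\lambda$. The computation just performed already does most of the work: $u^2 = 0$ if and only if $\lambda_{\mathfrak{e}}(u) = 0$ for all edges $\mathfrak{e}$, i.e.\ if and only if $u \in \bigcap_{\mathfrak{e} \in EG} \ker\lambda_{\mathfrak{e}}$. Since each $\lambda \in \Lambda$ is a nonzero scalar multiple of the corresponding $\lambda_{\mathfrak{e}}$, we have $\ker\lambda = \ker\lambda_{\mathfrak{e}}$, and therefore $\bigcap_{\lambda \in \Lambda}\ker\lambda = \bigcap_{\mathfrak{e} \in EG}\ker\lambda_{\mathfrak{e}}$. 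Combining this with the first part finishes the proof.

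There is no serious obstacle here; the statement is essentially a direct unwinding of the product formula (\ref{EqnProdLambda}) together with the fact that the edges are a basis of $\mathfrak{Z}_G$. The only point requiring a word of care is that $\lambda_{\mathfrak{e}}(u)^2 = 0 \Rightarrow \lambda_{\mathfrak{e}}(u) = 0$, which is automatic since $\mathbb{F}$ is a field; the characteristic $\neq 2$ hypothesis from Definition \ref{DefNormal} is not actually needed for this particular proposition, but it is harmless to have it in force. One should also note that $\Lambda$ contains exactly one representative per edge, so no edge is missed in the intersection and no spurious kernels are added.
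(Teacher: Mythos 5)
Your proof is correct and follows essentially the same route as the paper: the forward inclusion is trivial, the converse uses the product formula (\ref{EqnProdLambda}) (or equivalently the support identity (\ref{EqnSprt}), which is the variant the paper actually writes down), and the kernel description follows because the edges form a basis of $\mathfrak{Z}_G$ and each $\lambda\in\Lambda$ is a nonzero multiple of some $\lambda_{\mathfrak{e}}$. The only quibble is the phrase ``squaring is injective on $\mathbb{F}$,'' which is not literally true; the fact you need, and which you correctly supply in the same sentence, is that a field has no nonzero nilpotents.
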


\begin{proof}
If $u\in \mathrm{ann}G$, then certainly $u^{2}=0$. Conversely, if $u^{2}=0$,
then $\mathrm{supp}\left( u^{2}\right) =\emptyset $, making $\mathrm{supp}%
\left( uv\right) =\emptyset $ by (\ref{EqnSprt}). So $uv=0$ for any $v\in
U_{G}$. We have $u^{2}=0$ just when $\lambda _{\mathfrak{e}}(u)=0$ for each
edge $\mathfrak{e}$, by (\ref{EqnProdLambda}). As $\lambda _{\mathfrak{e}%
}(u)\approx \lambda (u)$ for that $\lambda \in \Lambda $ corresponding
to $\mathfrak{e}$, $u^{2}=0$ exactly when $u\in \mathbf{\cap }_{\lambda \in
\Lambda }\ker \lambda $.
\end{proof}

\begin{corollary}
\label{CorAnn}If $G$ is a connected bipartite graph, then 
\[
\mathrm{ann}G=\left\langle \sum_{x\in X}x-\sum_{y\in Y}y\right\rangle , 
\]%
where $X$ and $Y$ are the two parts of $G$.
\end{corollary}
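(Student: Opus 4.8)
The plan is to combine Proposition~\ref{PropAnn} with the classification of short functionals in the Theorem above. By Proposition~\ref{PropAnn}, $\mathrm{ann}\,G = \bigcap_{\lambda\in\Lambda}\ker\lambda$, and by the Theorem the $\lambda\in\Lambda$ are (up to scalars, which do not affect kernels) exactly the $\lambda_{\mathfrak{e}}$ as $\mathfrak{e}$ ranges over $EG$; here we are implicitly working under the characteristic restriction of the Theorem. So it suffices to show that $\bigcap_{\mathfrak{e}\in EG}\ker\lambda_{\mathfrak{e}}$ is the one-dimensional span of $w := \sum_{x\in X}x - \sum_{y\in Y}y$, where $(X,Y)$ is the (unique, since $G$ is connected) bipartition.

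First I would verify the inclusion $\langle w\rangle \subseteq \mathrm{ann}\,G$: for any edge $\mathfrak{e}=[x,y]$ of a bipartite graph, one endpoint lies in $X$ and the other in $Y$, so $\lambda_{\mathfrak{e}}(w) = (+1) + (-1) = 0$. Hence $w^2 = 0$ by (\ref{EqnProdLambda}), and $w\in\mathrm{ann}\,G$ by Proposition~\ref{PropAnn}. For the reverse inclusion, take $u = \sum_{x\in VG}\theta_x x$ with $u\in\mathrm{ann}\,G$, so $\lambda_{\mathfrak{e}}(u) = \theta_x + \theta_y = 0$ for every edge $[x,y]$. Equivalently, $\theta_x = -\theta_y$ whenever $x\sim y$. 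Since $G$ is connected, fix any vertex $x_0$ and set $c = \theta_{x_0}$; walking along paths and using $\theta_x = -\theta_y$ on each edge, one gets $\theta_x = c$ for every vertex at even distance from $x_0$ and $\theta_x = -c$ for every vertex at odd distance. Because $G$ is bipartite, the even-distance vertices and odd-distance vertices are exactly the two parts (say $X\ni x_0$ and $Y$), and this assignment is consistent—there is no odd cycle forcing $c=-c$. Therefore $u = c\bigl(\sum_{x\in X}x - \sum_{y\in Y}y\bigr) = c\,w$, so $u\in\langle w\rangle$.

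The only genuine subtlety is the well-definedness of the parity argument: assigning $\theta_x = \pm c$ by distance-parity from $x_0$ is forced by the edge relations, but one must know that no edge connects two vertices of the same parity—this is precisely bipartiteness. (In a graph with an odd cycle, the relations would force $c = -c$, i.e. $2c = 0$, hence $c = 0$ since $\mathrm{char}\,\mathbb{F}\neq 2$, which is how such graphs get trivial annihilator.) Connectedness is what makes $w$ unique up to scalar rather than giving one free parameter per component; if $G$ were disconnected one would get such a vector for each bipartite component, matching the direct-sum remark at the end of Section~\ref{SectNGA}. I do not expect any computational obstacle here—the content is entirely the translation $u\in\mathrm{ann}\,G \iff \theta_x+\theta_y=0$ for all edges, followed by the standard graph-theoretic fact that such "alternating" vertex labelings on a connected graph exist (and are unique up to scaling) exactly for bipartite graphs.
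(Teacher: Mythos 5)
Your proposal is correct and follows essentially the same route as the paper: reduce via Proposition~\ref{PropAnn} to the conditions $\lambda_{\mathfrak{e}}(u)=\theta_x+\theta_y=0$ for all edges, then use connectedness and bipartiteness to force the alternating coefficients $\pm\theta$ on the two parts. The only cosmetic difference is that you invoke the classification theorem for short functionals, which is not needed here since $\Lambda$ is by definition a set of scalar multiples of the $\lambda_{\mathfrak{e}}$.
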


\begin{proof}
Let $u=\sum_{x\in VG}\theta _{x}x$. The requirement $\lambda _{\mathfrak{e}%
}(u)=0$ for an edge $\mathfrak{e}=[x,y]$ reads $\theta _{x}+\theta _{y}=0$,
that is, $\theta _{y}=-\theta _{x}$. Then the connectedness of $G$ forces
the coefficients for $x\in X$ to have one value, say $\theta $, and those
for $y\in Y$ to have the opposite, $-\theta $.
\end{proof}

\subsection{The incidence matrix}

The \emph{incidence matrix }$I(G)$ is the matrix with rows indexed by $V(G)$
and columns by $E(G)$, with $1$ in row $x$ and column $\mathfrak{e}$ if $x$
is a vertex of $\mathfrak{e}$, and $0$ if not \cite[p. 165]{GR}. Matrix $%
I(G) $ can also be viewed as displaying in column $\mathfrak{e}$ the values
of $\lambda _{\mathfrak{e}}$ on the vertices. The matrix obtained by
replacing $\lambda _{\mathfrak{e}}$ by its representative in $\Lambda $ has
the same rank as $I(G),$since it is a scaling of the columns. A row
dependence of $I(G)$ corresponds to a member $u=\sum_{x}\theta _{x}x$ for
which $\lambda _{\mathfrak{e}}(u)=0$ for all edges $\mathfrak{e}$; that is,
a $u$ in $\mathbf{\cap }_{\lambda \in \Lambda }\ker \lambda $. Thus by \ref%
{PropAnn}, $\mathrm{rank}I(G)=p-\dim \mathrm{\mathrm{ann}}G$, $p$ the order
of $G$. As each bipartite component of $G$, including isolated vertices,
contributes $1$ to $\dim \mathrm{ann}G$, the number of such components is $%
\dim \mathrm{ann}G$. Hence:

\begin{proposition}
\label{PropIncRank}The rank of the incidence matrix $I(G)$ of a graph $G$ of
order $p$ is given by 
\[
\mathrm{rank}I(G)=p-k_{b}(G), 
\]%
where $k_{b}(G)$ is the number of its bipartite connected components \cite[%
Theorem 8.2.1]{GR}. Thus this rank is determined by the algebra $\mathcal{N}%
G $.
\end{proposition}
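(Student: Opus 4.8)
The plan is to identify $\mathrm{rank}\,I(G)$ with $p-\dim\mathrm{ann}\,G$ and then compute $\dim\mathrm{ann}\,G$ component by component. First I would read $I(G)$ columnwise: column $\mathfrak{e}$ records the values $\lambda_{\mathfrak{e}}(x)$, $x\in VG$. Scaling each column $\mathfrak{e}$ by the nonzero factor relating $\lambda_{\mathfrak{e}}$ to its representative in $\Lambda$ does not change the rank, so $\mathrm{rank}\,I(G)$ equals the rank of the $p\times q$ matrix whose column $\mathfrak{e}$ holds the vertex values of the chosen $\lambda\in\Lambda$. A left null vector of this matrix is a tuple $(\theta_x)$ with $\sum_x\theta_x\lambda(x)=0$ for every $\lambda\in\Lambda$, that is, an element $u=\sum_x\theta_x x$ of $\bigcap_{\lambda\in\Lambda}\ker\lambda$. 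By Proposition \ref{PropAnn} this intersection is exactly $\mathrm{ann}\,G$, so the left null space has dimension $\dim\mathrm{ann}\,G$, giving $\mathrm{rank}\,I(G)=p-\dim\mathrm{ann}\,G$.

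Next I would compute $\dim\mathrm{ann}\,G$. Since $\mathcal{N}(G\cup H)\cong\mathcal{N}G\oplus\mathcal{N}H$ compatibly with the grading, the annihilator is additive over connected components: $\dim\mathrm{ann}\,G=\sum_i\dim\mathrm{ann}\,G_i$ over the components $G_i$. For a bipartite component, including an isolated vertex (where $\mathcal{N}G_i=\mathbb{F}^0$), Corollary \ref{CorAnn} yields $\dim\mathrm{ann}\,G_i=1$. For a non-bipartite component $G_i$ I would show $\mathrm{ann}\,G_i=0$: if $u=\sum\theta_x x\in\mathrm{ann}\,G_i$, then $\lambda_{\mathfrak{e}}(u)=0$ for every edge $\mathfrak{e}=[x,y]$, i.e.\ $\theta_y=-\theta_x$ along each edge, so the coefficient function is constant up to sign on a $2$-coloring. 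Traversing an odd closed walk in $G_i$ and alternating signs forces $\theta_x=-\theta_x$ at some vertex, hence $\theta_x=0$ since $\mathrm{char}\,\mathbb{F}\neq 2$; connectedness then propagates the zero to all coefficients. Summing over components gives $\dim\mathrm{ann}\,G=k_b(G)$, whence $\mathrm{rank}\,I(G)=p-k_b(G)$.

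For the final clause, that this rank is an invariant of $\mathcal{N}G$: the grading $\mathcal{N}G=U_G\oplus\mathfrak{Z}_G$ is part of the structure of a normal algebra and is preserved by isomorphisms, so $p=\dim U_G$ is intrinsic, and $\mathrm{ann}\,G=\{u\in U_G:uU_G=0\}$ is defined purely in algebraic terms; therefore $p-k_b(G)=p-\dim\mathrm{ann}\,G$ depends only on the isomorphism type of $\mathcal{N}G$.

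The step I expect to be the main obstacle is the non-bipartite case: the edge relations $\theta_y=-\theta_x$ on their own only say the coefficients form a signed $2$-coloring, and one must argue that the presence of an odd cycle collapses this to $0$ — this is precisely where $\mathrm{char}\,\mathbb{F}\neq 2$ is genuinely needed. The remaining ingredients (column-scaling invariance of rank, additivity of $\mathrm{ann}$ over components, and the intrinsic nature of $U_G$ and $\mathrm{ann}\,G$) are routine.
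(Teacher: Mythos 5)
Your proof is correct and follows essentially the same route as the paper: read the columns of $I(G)$ as the values of the $\lambda_{\mathfrak{e}}$, identify the left null space with $\bigcap_{\lambda\in\Lambda}\ker\lambda=\mathrm{ann}\,G$ via Proposition \ref{PropAnn}, and then count $\dim\mathrm{ann}\,G=k_b(G)$ component by component. The only difference is that you spell out the step the paper leaves implicit (that a non-bipartite component has trivial annihilator, via the odd closed walk and $\mathrm{char}\,\mathbb{F}\neq 2$), which is a worthwhile addition but not a different argument.
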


However, $\mathcal{N}G$ does not in general determine the entire number $%
k(G) $ of connected components of $G$, as will be illustrated at the end of
the next section. A variant $I^{\prime }(G)$ of the incidence matrix of $G$
is presented in \cite[p. 24]{BI}: orient the edges of $G$ by assigning a
direction to each one. Then if $\mathfrak{e}=\left[ a,b\right] $ with the
direction of $\mathfrak{e}$ being $a\longrightarrow b$, change the $1$ in
the $\left( a,\mathfrak{e}\right) $ position of $I(G)$ to $-1$ . Proposition
4.3 of \cite{BI} now shows that $\mathrm{rank}I^{\prime }(G)=p-k(G)$.

\section{Squares of weight 1\label{SectSq1}}

As we noted, a member $u$ of $U_{G}$ has $\mathrm{wt}\left( u^{2}\right) =1$
exactly when $u^{2}\approx \mathfrak{e}$ for some edge $\mathfrak{e}$.
The product formula (\ref{EqnProdLambda}) shows that $u^{2}\approx 
\mathfrak{e}$ requires $\lambda _{\mathfrak{e}}(u)\neq 0$ and $\lambda _{%
\mathfrak{f}}(u)=0$ for all edges $\mathfrak{f}$ other than $\mathfrak{e}$.
If such a $u$ exists, we can scale it to have $\lambda _{\mathfrak{e}}(u)=1$%
, making $u^{2}=\mathfrak{e}$. The focus of this section will be connected 
\emph{edge-square} graphs, those for which \emph{every} edge is a square.

\begin{proposition}
\label{PropSqEdge}Let $G$ be connected and let $\mathfrak{e}=\left[ a,b%
\right] $ be an edge. Then $u^{2}=\mathfrak{e}$ for some $u\in U_{G}$ if and
only if one of the following holds:

\begin{enumerate}
\item $\mathfrak{e}$ is in an odd cycle and $G-\mathfrak{e}$ is bipartite;

\item $\mathfrak{e}$ is a bridge and at least one component of $G-\mathfrak{e%
}$ is bipartite.
\end{enumerate}
\end{proposition}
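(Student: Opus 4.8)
The plan is to characterize when $u^2 = \mathfrak{e} = [a,b]$ by translating the condition through the product formula \eqref{EqnProdLambda}. Writing $u = \sum_x \theta_x x$, we have $u^2 = \sum_{\mathfrak{f}} \lambda_{\mathfrak{f}}(u)^2 \mathfrak{f}$, so $u^2 = \mathfrak{e}$ means precisely $\lambda_{\mathfrak{e}}(u)^2 = 1$ (after scaling, $\theta_a + \theta_b = \pm 1$; say $+1$) while $\lambda_{\mathfrak{f}}(u) = 0$ for every other edge $\mathfrak{f} = [x,y]$, i.e. $\theta_x + \theta_y = 0$. So the existence of $u$ is equivalent to the existence of a vertex-labeling $\theta : VG \to \mathbb{F}$ with $\theta_x + \theta_y = 0$ on every edge except $\mathfrak{e}$, where instead $\theta_a + \theta_b = 1$. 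First I would analyze the graph $H := G - \mathfrak{e}$: the constraint $\theta_x = -\theta_y$ along each edge of $H$ forces, on each connected component of $H$, a two-coloring pattern (the $\pm\theta$ alternation from Corollary \ref{CorAnn}) if that component is bipartite, and forces $\theta \equiv 0$ on that component if it contains an odd cycle (since going around an odd cycle gives $\theta_x = -\theta_x$, hence $\theta_x = 0$ as $\mathrm{char}\,\mathbb{F} \neq 2$, and connectedness spreads the $0$).

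Next I would split into the two structural cases for $\mathfrak{e}$ in the connected graph $G$. \textbf{Case A: $\mathfrak{e}$ is not a bridge}, so $a$ and $b$ lie in the same component of $H$, and that component is all of $H$ (as $G$ is connected). If $H$ is bipartite with parts $X, Y$, then either $\mathfrak{e}$ joins two vertices of the same part — in which case $G$ has an odd cycle through $\mathfrak{e}$ (a path in $H$ from $a$ to $b$ together with $\mathfrak{e}$), and $H = G - \mathfrak{e}$ is bipartite, matching condition (1) — or $\mathfrak{e}$ joins $X$ to $Y$, making $G$ itself bipartite; then I must show no valid $\theta$ exists. Indeed, if $H$ is bipartite and connected with $a \in X$, $b \in Y$, the only solutions to $\theta_x = -\theta_y$ on all of $H$ have $\theta \equiv \theta_0$ on $X$ and $-\theta_0$ on $Y$, giving $\theta_a + \theta_b = \theta_0 - \theta_0 = 0 \neq 1$, a contradiction. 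If instead $H$ has an odd cycle, then $\theta \equiv 0$ on $H = VG$, again forcing $\theta_a + \theta_b = 0$; so the only surviving subcase is "$H$ bipartite, $\mathfrak{e}$ inside a part," which is exactly (1). Conversely, given (1), $H$ is bipartite and $a, b$ are in the same part, so setting $\theta \equiv \tfrac12$ on that part and $-\tfrac12$ on the other yields $\theta_a + \theta_b = 1$ and $\theta_x + \theta_y = 0$ on every edge of $H$ — a valid $u$.

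\textbf{Case B: $\mathfrak{e}$ is a bridge}, so $H = G - \mathfrak{e}$ has exactly two components $C_a \ni a$ and $C_b \ni b$. On $C_a$, $\theta$ is either the $\pm\theta_a$ alternation (if $C_a$ bipartite) or identically $0$ (if $C_a$ has an odd cycle), and similarly on $C_b$. The single remaining equation is $\theta_a + \theta_b = 1$, which has a solution if and only if we can make $\theta_a$ and $\theta_b$ not both forced to $0$ — that is, if and only if at least one of $C_a, C_b$ is bipartite (then set the free value on that side to $1$ and the other side's value, free or forced, to $0$). If both components contain odd cycles, then $\theta_a = \theta_b = 0$ and no solution exists. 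This is exactly condition (2), and combined with Case A this exhausts all possibilities since a connected graph's edge is either a bridge or lies on a cycle — and if it lies on a cycle, I still need to note that when every cycle through $\mathfrak{e}$ is even (equivalently $\mathfrak{e}$ joins the two parts of the bipartite $H$, or $G$ is bipartite), no $u$ exists, which is consistent with (1) failing. I expect the main obstacle to be bookkeeping the case where $\mathfrak{e}$ lies on both even and odd cycles: one must be careful that "there exists an odd cycle through $\mathfrak{e}$ and $G - \mathfrak{e}$ is bipartite" is the correct phrasing — the existence of an odd cycle through $\mathfrak{e}$ is equivalent to $a, b$ being in the same part of $H$ when $H$ is bipartite, and the bipartiteness of $H$ is the substantive extra hypothesis that rules out $\theta \equiv 0$ being forced.
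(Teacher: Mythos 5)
Your proposal is correct and follows essentially the same route as the paper: both reduce $u^{2}=\mathfrak{e}$ to the coefficient conditions $\theta_{a}+\theta_{b}\neq 0$ and $\theta_{x}+\theta_{y}=0$ on the remaining edges, observe that the latter forces a $\pm\theta$ alternation (hence a bipartition) on each component of $G-\mathfrak{e}$ and forces $\theta\equiv 0$ on any component containing an odd cycle, and then construct the explicit $\pm\tfrac12$ or $\pm 1,0$ assignments for the converse. Your organization by the bridge/non-bridge dichotomy and the solvability of the resulting linear system is a slightly more systematic packaging of the same argument.
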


\begin{proof}
Suppose that $u=\sum_{x\in VG}\theta _{x}x$ and $u^{2}=\mathfrak{e}$. Then $%
\lambda _{\mathfrak{e}}(u)=\theta _{a}+\theta _{b}\neq 0$; so $\theta
_{a}\neq 0$, say. Follow a path starting at $a$ not beginning with $%
\mathfrak{e}$. Then to have $\mathfrak{\theta }_{x}+\mathfrak{\theta }_{y}=0$
for all other edges $[x,y]$ along that path, the coefficients $\mathfrak{%
\theta }_{x}$ will have to alternate between $\mathfrak{\theta }_{a}$ and $-%
\mathfrak{\theta }_{a}$. If the path reaches $b$, it must be that $\mathfrak{%
\theta }_{b}=\mathfrak{\theta }_{a}$ to avoid $\mathfrak{\theta }_{a}+%
\mathfrak{\theta }_{b}=0$. Thus if $\mathfrak{e}$ is in a cycle, the cycle
must be odd, so that $G$ is not bipartite. But $G-\mathfrak{e}$ is
bipartite, with $\left\{ x|\theta _{x}=\theta _{a}\right\} $ and $\left\{
y|\theta _{y}=-\theta _{a}\right\} $ giving the needed parts. If $\mathfrak{e%
}$ is not in a cycle, then deleting $\mathfrak{e}$ separates $G$ into two
connected components. The one containing $a$ is bipartite, shown again by
the two parts described but just for that component. Hence one of the two
conditions holds.

Conversely, if condition 1 holds, assign $\theta _{a}$ and $\theta _{b}$
both to be $1/2$. Then define $\theta _{x}=1/2$ for $x$ in the part of $G-%
\mathfrak{e}$ containing $a$ and $b$, and $\theta _{y}=-1/2$ for $y$ in the
other part. As any edge $[x,y]$ other than $\mathfrak{e}$ connects vertices
in different parts, $\theta _{x}+\theta _{y}=0$ and $u^{2}=\mathfrak{e}$. If
condition 2 holds, suppose that the component containing $a$ is bipartite,
with parts $X$ and $Y$, and $a\in X$. Put $\theta _{x}=1$ for $x\in X$ and $%
\theta _{y}=-1$ for $y\in Y$. For any $z$ in the other component containing $%
b$, put $\mathfrak{\theta }_{z}=0$. Again $u^{2}=\mathfrak{e}$.
\end{proof}

For characterizing edge-square graphs, the relevant graphs are \emph{%
unicyclic} graphs--connected graphs having exactly one cycle \cite{AH}. They
in turn are characterized among connected graphs as those for which order
and size are equal. Such a graph comes from a tree by adding an edge to
complete a cycle.

\begin{proposition}
\label{PropEdgeSq}If $G$ is a tree or a unicyclic graph whose cycle is odd,
then $G$ is edge-square.
\end{proposition}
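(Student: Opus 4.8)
The plan is to reduce everything to Proposition~\ref{PropSqEdge}: for each edge $\mathfrak{e}=[a,b]$ of $G$ I would check that one of its two conditions holds, and since trees and unicyclic graphs are connected by definition, the proposition then produces a $u\in U_G$ with $u^2=\mathfrak{e}$, so $G$ is edge-square.

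First I would dispose of the tree case. A tree has no cycles, so every edge $\mathfrak{e}$ is a bridge, and $G-\mathfrak{e}$ splits into exactly two components, each again a tree and hence bipartite. Thus condition~2 of Proposition~\ref{PropSqEdge} is satisfied for every edge, finishing this case.

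Next I would handle a unicyclic $G$ with unique cycle $C$ of odd length, splitting into two sub-cases. If $\mathfrak{e}$ lies on $C$, then $\mathfrak{e}$ is in an odd cycle; moreover deleting one edge of a cycle cannot disconnect the graph, so $G-\mathfrak{e}$ is connected with $p$ vertices and $p-1$ edges, hence a tree, in particular bipartite — condition~1 applies. If $\mathfrak{e}$ is not on $C$, then (since $C$ is the only cycle) $\mathfrak{e}$ lies on no cycle, so it is a bridge; $G-\mathfrak{e}$ then has two components, one containing $C$ and the other acyclic, hence a tree and in particular bipartite — condition~2 applies. In every case some hypothesis of Proposition~\ref{PropSqEdge} holds, so $G$ is edge-square.

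I do not expect a genuine obstacle here. The only points needing a word of care are the structural facts that in a unicyclic graph an edge is a bridge exactly when it is off the cycle, and that deleting a cycle edge leaves a tree; both follow immediately from the order-equals-size characterization of unicyclic graphs recalled just before the statement. It is also worth stating explicitly that connectedness of $G$ is what lets us invoke Proposition~\ref{PropSqEdge} at all.
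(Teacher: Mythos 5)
Your proof is correct and follows the same route as the paper, which simply states that both cases follow from Proposition~\ref{PropSqEdge}; you have supplied the case-by-case verification that the paper leaves implicit. The structural details you fill in (every tree edge is a bridge with bipartite components; a cycle edge of an odd unicyclic graph leaves a tree; an off-cycle edge is a bridge with a bipartite acyclic component) are all accurate.
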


\begin{proof}
Both statements follow from Proposition \ref{PropSqEdge}.
\end{proof}

Let $G$ be a connected edge-square graph. With $EG=\left\{ \mathfrak{e}%
_{1},\ldots ,\mathfrak{e}_{q}\right\} $, let $u_{i}\in U_{G}$ give $%
u_{i}^{2}=\mathfrak{e}_{i}$, $1\leq i\leq q$. By (\ref{EqSprt}), $\mathrm{%
supp}(u_{i}u_{j})=\emptyset $ for $i\neq j$, forcing $u_{i}u_{j}=0$. This
implies that the $u_{i}$ are linearly independent, so that $q\leq p$: as $G$
is connected, $q=p-1$ or $p$. Define a normal algebra $\mathcal{O}_{p}$ over 
$\mathbb{F}$ ($\mathcal{O}$ for \textquotedblleft one\textquotedblright )
with basis $w_{1},\ldots ,w_{p},\,\mathfrak{z}_{1},\ldots ,\mathfrak{z}_{p}$%
, by the relations%
\begin{equation}
\mathcal{O}_{p}:\left\{ 
\begin{tabular}{l}
$w_{i}^{2}=\mathfrak{z}_{i},\,1\leq i\leq p;$ \\ 
$w_{i}w_{j}=0,\,1\leq i,j\leq p,\,i\neq j;$ \\ 
all other basis products $0$.%
\end{tabular}%
\right.   \label{EqnOp}
\end{equation}%
(The spline algebra is $\mathcal{O}_{1}$.) Its $U$- and $\mathfrak{Z}$%
-spaces are $\left\langle w_{1},\ldots ,w_{p}\right\rangle $ and $%
\left\langle \mathfrak{z}_{1},\ldots ,\mathfrak{z}_{p}\right\rangle $. It is
easy to check that the short homomorphisms of $\mathcal{O}_{p}$ are
proportional to those given by $w_{i}\longrightarrow r$, $\mathfrak{z}%
_{i}\longrightarrow \mathfrak{s}$, and $w_{j}\longrightarrow 0$, $\mathfrak{z%
}_{j}\longrightarrow 0$ for $i\neq j$, where $1\leq i,\,j\leq p$.

\begin{theorem}
\label{ThmEdgeSq}For a connected edge-square graph $G$, there are the two
parameter possibilities $q=p-1$ or $q=p$ with corresponding realizations:

\begin{enumerate}
\item $q=p-1$: then $\mathcal{N}G$ is isomorphic to $\mathbb{F}^{0}\oplus 
\mathcal{O}_{p-1}$. In this case, $\dim \mathrm{ann}G=1$ and $G$ is a tree.

\item $q=p$: then $\mathcal{N}G$ is isomorphic to $\mathcal{O}_{p}$. Here $%
\dim \mathrm{ann}G=0$, and $G$ is unicyclic with an odd cycle.
\end{enumerate}
\end{theorem}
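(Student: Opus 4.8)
The plan is to combine the structural facts already assembled: the linear independence of the vertex‑square elements $u_i$, the dichotomy $q=p-1$ or $q=p$ coming from connectedness, and Proposition~\ref{PropEdgeSq} together with Proposition~\ref{PropSqEdge}, which identify which connected graphs are edge‑square. The two cases will then be handled by building an explicit isomorphism onto $\mathbb{F}^0\oplus\mathcal{O}_{p-1}$ or onto $\mathcal{O}_p$, using the $u_i$ as the $U$‑space generators and the edges $\mathfrak{e}_i$ as the $\mathfrak{Z}$‑space generators.

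First I would record the setup from the paragraph preceding the theorem: pick $u_i\in U_G$ with $u_i^2=\mathfrak{e}_i$ for $1\le i\le q$, note $u_iu_j=0$ for $i\ne j$ by (\ref{EqnSprt}), and conclude the $u_i$ are linearly independent (any dependence $\sum c_iu_i=0$, squared, gives $\sum c_i^2\mathfrak{e}_i=0$, hence all $c_i=0$). Since $G$ is connected, $q=p-1$ (tree) or $q=p$ (unicyclic); and an edge‑square connected graph is exactly a tree or a unicyclic graph with odd cycle, by Propositions~\ref{PropSqEdge} and~\ref{PropEdgeSq} — in the bridge case every edge being a square forces every component of every single‑edge deletion to be bipartite, which for a connected graph means the graph is a tree, and in the cycle case it forces the unique cycle to be odd.

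In case $q=p-1$: here $\dim\mathrm{ann}G=1$ by Corollary~\ref{CorAnn} (a tree is connected bipartite), so pick a nonzero $u_0\in\mathrm{ann}G$. The $p-1$ elements $u_1,\dots,u_{p-1}$ together with $u_0$ form a basis of $U_G$: they are independent since $u_0u_0=0$ while each $u_i^2\ne0$ (so $u_0\notin\langle u_1,\dots,u_{p-1}\rangle$ as any combination with a nonzero $u_i$‑coefficient has nonzero square), and there are $p$ of them. The edges $\mathfrak{e}_1,\dots,\mathfrak{e}_{p-1}$ are a basis of $\mathfrak{Z}_G$. The map sending $u_0$ to a generator of the $\mathbb{F}^0$ summand, $u_i\mapsto w_i$, $\mathfrak{e}_i\mapsto\mathfrak{z}_i$ ($1\le i\le p-1$) is then linear, bijective, respects the grading, and matches products: $u_0$ annihilates everything on both sides, $u_iu_j=0=w_iw_j$ for $i\ne j$, and $u_i^2=\mathfrak{e}_i\mapsto\mathfrak{z}_i=w_i^2$. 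So $\mathcal{N}G\cong\mathbb{F}^0\oplus\mathcal{O}_{p-1}$. In case $q=p$: $\dim\mathrm{ann}G=0$ since $G$ is not bipartite (it has an odd cycle), so by Proposition~\ref{PropAnn} the $u_i$ span $U_G$, hence form a basis; the $\mathfrak{e}_i$ form a basis of $\mathfrak{Z}_G$; and $u_i\mapsto w_i$, $\mathfrak{e}_i\mapsto\mathfrak{z}_i$ gives an isomorphism $\mathcal{N}G\cong\mathcal{O}_p$ by the same product check.

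The one genuinely substantive point — rather than routine bookkeeping — is verifying that a connected edge‑square graph really is a tree or odd‑cycle unicyclic graph, i.e.\ the converse direction: that no connected graph with an \emph{even} cycle, and no graph with two or more independent cycles, can have every edge a square. For an even cycle $C$ inside $G$, any edge $\mathfrak{e}$ of $C$ lies in the cycle $C$, which is even, so condition~1 of Proposition~\ref{PropSqEdge} fails for $\mathfrak{e}$ (it is in an even cycle, not an odd one), and since $\mathfrak{e}$ is not a bridge condition~2 fails too — hence $\mathfrak{e}$ is not a square, contradiction. If $G$ has at least two independent cycles then $q\ge p+1$, contradicting $q\le p$ from the independence of the $u_i$; so the only possibilities left are $q=p-1$ and $q=p$, and in the latter case the unique cycle must be odd by the even‑cycle argument just given. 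Everything else is the straightforward dimension count and product‑table matching sketched above, so I expect the write‑up to be short.
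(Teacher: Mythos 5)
Your proposal is correct and follows essentially the same route as the paper: the $u_i$ with $u_i^2=\mathfrak{e}_i$ are pairwise orthogonal and hence linearly independent, forcing $q\le p$ and thus $q=p-1$ (tree) or $q=p$ (unicyclic), after which the basis matching $u_i\leftrightarrow w_i$, $\mathfrak{e}_i\leftrightarrow\mathfrak{z}_i$ (plus $u_0\in\mathrm{ann}G$ for the $\mathbb{F}^{0}$ summand) gives the isomorphisms. The only cosmetic difference is in ruling out an even cycle in case 2: the paper notes an even cycle would make $G$ bipartite and hence $\mathrm{ann}G\neq 0$ by Corollary~\ref{CorAnn}, contradicting $\mathrm{ann}G=0$, whereas you appeal to Proposition~\ref{PropSqEdge} directly; both work.
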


\begin{proof}
If $q=p-1$, $G$ is a tree and bipartite. As earlier, if the parts are $X$
and $Y$, $u_{0}=\sum_{x\in X}x-\sum_{y\in Y}y\in \mathrm{ann}G$. The
isomorphism comes from the remarks leading to $\mathcal{O}_{p}$, with $%
u_{i}\longleftrightarrow w_{i}$ and $\mathfrak{e}_{i}\longleftrightarrow 
\mathfrak{z}_{i}$, $1\leq i\leq p-1$. That $\dim \mathrm{ann}G=1$ follows
from the fact that if $w\in \left\langle w_{1},\ldots ,w_{p}\right\rangle $
and $w^{2}=0$, then $w=0$. So $\mathrm{ann}G=\left\langle u_{0}\right\rangle 
$, giving the $\mathbb{F}^{0}$ term.

When $q=p$, the matching is the same, but with $1\leq i\leq p$. Since a
unicyclic graph $G$ whose cycle is even is bipartite and $\mathrm{ann}G\neq
0 $, the cycle in item 2 must be odd.
\end{proof}

The direct sum $\mathcal{O}_{p_{1}}\oplus \mathcal{O}_{p_{2}}\oplus \ldots
\oplus \mathcal{O}_{p_{n}}$ is isomorphic to $\mathcal{O}_{p}$, with $p=\sum
p_{i}$. Thus if $G$ is the union of $n$ graphs of the type in item 2 of the
theorem, $\mathcal{N}G$ depends only on $p$ and not the number of components.

\section{Squares of weight 2\label{SectSq2}}

Now suppose that $G$ is a graph for which every \emph{pair} of edges is the
support of a square of a member of $U_{G}$: a\emph{\ pair-square} graph.
Assume that $G$ has no isolated vertices. We can also assume that $G$ is
connected. For if not, any pair of edges with one edge in one component and
the other in another would separately have each edge supporting a square
from its component. That means all edges would support squares and the graph
would be edge-square.

If $\mathrm{supp}(u^{2})=\left\{ \mathfrak{e},\mathfrak{f}\right\} $, then
by scaling $u$, we can take $\lambda _{\mathfrak{e}}(u)=1$ and have $u^{2}=%
\mathfrak{e}+\alpha ^{2}\mathfrak{f}$ for some $\alpha \neq 0$. If $v^{2}=%
\mathfrak{e}+\beta ^{2}\mathfrak{g}$ for some third edge $\mathfrak{g}$,
with $\beta \neq 0$ and $\lambda _{\mathfrak{e}}(v)=1$, then $%
(u-v)^{2}=\alpha ^{2}\mathfrak{f+}\beta ^{2}\mathfrak{g}$. It follows that
if for some fixed edge $\mathfrak{e}$, all the edge pairs containing $%
\mathfrak{e}$ are square supports, then \emph{all }edge pairs are square
supports. However, if $\mathfrak{e}$ is a square, then all edges support
squares (since now $\alpha =0$), and the graph is again edge-square. So we
assume that $G$ is connected with no isolated vertices, and that no edge
alone supports a square.

Fix one edge $\mathfrak{e}_{0}$ and on numbering the remaining edges $%
\mathfrak{e}_{1},\ldots ,\mathfrak{e}_{q-1}$, let $u_{i}^{2}=\mathfrak{e}%
_{0}+\alpha _{i}^{2}\mathfrak{e}_{i},\,1\leq i\leq q-1$, with $\alpha
_{i}=\lambda _{\mathfrak{e}_{i}}(u_{i})$. Then $u_{i}u_{j}=\mathfrak{e}_{0}$%
, $1\leq i,j\leq q-1$, $i\neq j$. The $u_{i}$ are linearly independent: if $%
\sum \beta _{i}u_{i}=0$, the product with $u_{j}$ gives $\left( \sum \beta
_{i}\right) \mathfrak{e}_{0}+\beta _{j}\alpha _{j}^{2}\mathfrak{e}_{j}=0$,
making $\beta _{j}=0$ for all $j$. In particular, $p\geq q-1$. Now set up a
normal $\mathbb{F}$-algebra $\mathcal{T}_{p}$ ($\mathcal{T}$\ for
\textquotedblleft two\textquotedblright ) with dimension $2p+1$ and basis $%
w_{1},\ldots ,w_{p},\,\mathfrak{z}_{0},\mathfrak{z}_{1},\ldots ,\mathfrak{z}%
_{p}$. Again, the $U$-space is $\left\langle w_{1},\ldots
,w_{p}\right\rangle $ and the $\mathfrak{Z}$-space $\left\langle \mathfrak{z}%
_{0},\mathfrak{z}_{1},\ldots ,\mathfrak{z}_{p}\right\rangle $. The defining
relations are%
\begin{equation}
\mathcal{T}_{p}:\left\{ 
\begin{tabular}{l}
$w_{i}w_{j}=\mathfrak{z}_{0}+\delta _{ij}\mathfrak{z}_{i},\,1\leq i,j\leq p;$
\\ 
all other basis products $0$.%
\end{tabular}%
\right.  \label{EqTp}
\end{equation}%
(Here $\delta _{ij}$ is the Kronecker delta.) It will be useful to have a
description of the short functionals of $\mathcal{T}_{p}$:

\begin{lemma}
\label{LemLambdaTp}Let $\varphi $ be a short homomorphism of the algebra $%
\mathcal{T}_{p}$, $p\geq 3$, with associated functionals $\lambda $ and $\mu 
$. Then $\varphi $ is proportional to one of the following:%
\[
\begin{tabular}{ll}
$\varphi _{0}:$ & $\lambda _{0}(w_{i})=1,\,\mu _{0}(\mathfrak{z}_{0})=1$, $%
\mu _{0}(\mathfrak{z}_{i})=0$, $1\leq i\leq p;$ \\ 
$\varphi _{i},\,1\leq i\leq p:$ & $\lambda _{i}(w_{j})=\delta _{ij},\,\mu
_{i}\left( \mathfrak{z}_{0}\right) =0$, $\mu _{i}(\mathfrak{z}_{j})=\delta
_{ij}$, $1\leq i,\,j\leq p.$%
\end{tabular}%
\]
\end{lemma}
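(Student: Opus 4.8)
The plan is to argue directly from the defining relations~(\ref{EqTp}), since $\mathcal{T}_p$ is not a graph algebra and the earlier classification of short functionals (which rested on the vertex--edge incidence of a graph) does not apply verbatim. A short homomorphism $\varphi$ of $\mathcal{T}_p$ is determined by a functional $\lambda$ on the $U$-space $\langle w_1,\dots,w_p\rangle$ and a functional $\mu$ on the $\mathfrak{Z}$-space $\langle\mathfrak{z}_0,\dots,\mathfrak{z}_p\rangle$ subject only to $\mu(uv)=\lambda(u)\lambda(v)$ for all $u,v$ in the $U$-space; by bilinearity it suffices to impose this on the basis pairs $w_i,w_j$. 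Writing $a_i=\lambda(w_i)$, $m_0=\mu(\mathfrak{z}_0)$, $m_i=\mu(\mathfrak{z}_i)$ and substituting $w_iw_j=\mathfrak{z}_0+\delta_{ij}\mathfrak{z}_i$, the homomorphism condition becomes
\[
m_0=a_ia_j\ \ (i\neq j),\qquad m_0+m_i=a_i^{2}\ \ (\text{all }i).
\]
Since the products $w_iw_i=\mathfrak{z}_0+\mathfrak{z}_i$ together with $w_1w_2=\mathfrak{z}_0$ span the whole $\mathfrak{Z}$-space, a nonzero $\varphi$ forces $\lambda\neq0$, so at least one $a_i$ is nonzero; from here I would split into cases on how many of the $a_i$ are nonzero.

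First I would treat the case in which at least two of the $a_i$, say $a_1$ and $a_2$, are nonzero. Then $m_0=a_1a_2\neq0$, so for every index $k\neq1$ the first relation gives $a_k=m_0/a_1$ (a fixed nonzero value), and likewise $a_k=m_0/a_2$ for every $k\neq2$. Taking $k=3$, which exists because $p\geq3$, and comparing the two expressions yields $a_1=a_2$; it then follows that all $a_i$ equal a common nonzero scalar $a$, with $m_0=a^2$, and the second relation forces $m_i=0$ for $1\le i\le p$. Thus $\varphi$ is the scaling of $\varphi_0$ whose $\lambda$ is $a\lambda_0$. In the remaining case exactly one $a_i$ is nonzero, say $a_1\neq0$ and $a_j=0$ for $j\geq2$; choosing two indices $j,k\geq2$ with $j\neq k$ (again possible since $p\geq3$) gives $m_0=a_ja_k=0$, whence the second relation gives $m_1=a_1^{2}$ and $m_j=0$ for $j\geq2$. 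So $\varphi$ is the scaling of $\varphi_1$ with $\lambda=a_1\lambda_1$, and since the relations~(\ref{EqTp}) are symmetric under permuting the indices $1,\dots,p$ (permuting the $w_i$ and, correspondingly, the $\mathfrak{z}_i$), relabelling covers $\varphi_i$ for each $i$. I would close by noting, as a routine verification, that each of $\varphi_0,\varphi_1,\dots,\varphi_p$ does satisfy $\mu(w_iw_j)=\lambda(w_i)\lambda(w_j)$, so these are genuine short homomorphisms and the list is exhaustive.

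The proof is short; the only real content is the bookkeeping in the two-nonzero case that collapses all the $a_i$ to a single value, and the main point to watch is that the hypothesis $p\geq3$ is genuinely needed there — both for producing the third index that forces $a_1=a_2$, and for producing the two spare indices $j,k$ that force $m_0=0$ in the one-nonzero case. For $p=2$ these deductions fail and $\mathcal{T}_2$ picks up a whole one-parameter family of additional short homomorphisms, which is why the lemma is stated only for $p\geq3$.
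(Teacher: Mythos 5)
Your proof is correct and follows essentially the same route as the paper's: both reduce the problem to the relations $\mu(\mathfrak{z}_0)=\lambda(w_i)\lambda(w_j)$ for $i\neq j$ and $\mu(\mathfrak{z}_0)+\mu(\mathfrak{z}_i)=\lambda(w_i)^2$, and then split into the case where all $\lambda(w_i)$ are equal and nonzero versus the case where exactly one is nonzero (the paper organizes the split by whether $\mu(\mathfrak{z}_0)$ vanishes, which amounts to the same dichotomy). One small quibble with your closing remark: the one-nonzero case does not actually require $p\geq 3$, since $\mu(\mathfrak{z}_0)=\lambda(w_1)\lambda(w_2)=0$ already follows from a single vanishing value; $p\geq 3$ is genuinely needed only to force $\lambda(w_1)=\lambda(w_2)$ in the other case.
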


\begin{proof}
Let $\lambda $ and $\mu $ be the functionals of a short homomrphism $\varphi 
$ of $\mathcal{T}_{p}$. First suppose that $\mu (\mathfrak{z}_{0})\neq 0$.
Then as $\lambda (w_{i})\lambda (w_{j})=\mu (\mathfrak{z}_{0})$ for $i\neq j$%
, all $\lambda (w_{i})$ must all have the same nonzero value. Scaling it to
be $1$ makes $\mu (\mathfrak{z}_{0})=1$ and then all $\mu (\mathfrak{z}%
_{i})=0$, from $\lambda (w_{i})^{2}=\mu (\mathfrak{z}_{0})+\mu (\mathfrak{z}%
_{i})$. This is the recipe for $\varphi _{0}$.

Now assume that $\mu (\mathfrak{z}_{0})=0$. Then for some $i$, $\mu (%
\mathfrak{z}_{i})\neq 0$. Since $\mathfrak{z}_{i}=w_{i}^{2}-\mathfrak{z}_{0}$%
, $\mu (w_{i}^{2})\neq 0$, making $\lambda (w_{i})\neq 0$. Then $\lambda
(w_{i})\lambda (w_{j})=\mu (\mathfrak{z}_{0})$ for $i\neq j$ reads $\lambda
(w_{i})\lambda (w_{j})=0$ and gives $\lambda (w_{j})=0$. This in turn shows
that $\mu (\mathfrak{z}_{j})=0$. Scaling produces $\varphi _{i}$.

That the descriptions of the $\varphi _{i}$ do give short homomorphisms is
straight-forward.
\end{proof}

\begin{theorem}
\label{ThmPairSq}Let $G$ be a connected graph such that every pair of edges
is the support of a square of a member of $U_{G}$. Suppose also that $G$ is
not an edge-square graph, so that $p\geq 4$. Then $q=p$ or $q=p+1$, and the
following hold:

\begin{enumerate}
\item $q=p$: $G$ is an even cycle and $\mathcal{N}G$ is isomorphic to $%
\mathbb{F}^{0}\oplus \mathcal{T}_{p-1}$.

\item $q=p+1$: $\mathcal{N}G$ is isomorphic $\mathcal{T}_{p}$.
\end{enumerate}
\end{theorem}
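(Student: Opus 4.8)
The plan is to start from the structural facts already extracted in Section~\ref{SectSq2}. We are given a connected, non-edge-square, pair-square graph $G$; the preceding discussion produced elements $u_1,\dots,u_{q-1}\in U_G$ with $u_i^2=\mathfrak{e}_0+\alpha_i^2\mathfrak{e}_i$ and $u_iu_j=\mathfrak{e}_0$ for $i\neq j$, these being linearly independent, so $p\ge q-1$. The first task is to pin down $q$ relative to $p$. Since $G$ is connected, $q\ge p-1$; since $G$ is not edge-square, by Proposition~\ref{PropSqEdge} no edge is a bridge with a bipartite side and no edge lies in an odd cycle whose removal leaves a bipartite graph --- in particular $G$ is not a tree, so $q\ge p$. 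I would then show $q\le p+1$ by a rank/dimension count: the $q-1$ vectors $u_i$ are independent in the $p$-dimensional space $U_G$, but more is true. Adjoining any single square-root $u_0$ of $\mathfrak{e}_0+(\text{something})$ or using the annihilator, one sees $\dim\mathrm{ann}\,G\le 1$ (a connected graph has at most one bipartite component, namely itself if bipartite), hence $\mathrm{rank}\,I(G)=p-\dim\mathrm{ann}\,G\ge p-1$, and counting edges against this rank together with the independence of the $u_i$ forces $q-1\le p$, i.e. $q\le p+1$. So $q\in\{p,p+1\}$.

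Next I would treat the two cases. For $q=p+1$: here the $u_i$, $1\le i\le p$, form a basis of $U_G$. Set $w_i\leftrightarrow u_i$, $\mathfrak{z}_i\leftrightarrow \mathfrak{e}_i$ for $1\le i\le p$ and $\mathfrak{z}_0\leftrightarrow\mathfrak{e}_0$. One must check this is a well-defined linear bijection $\mathcal{N}G\to\mathcal{T}_p$ preserving the grading and the product. Linear bijectivity on the $U$-side is the basis statement just established; on the $\mathfrak{Z}$-side one needs that $\mathfrak{e}_0,\mathfrak{e}_1,\dots,\mathfrak{e}_p$ are exactly the $q=p+1$ edges, which they are by construction. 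The product relations $w_iw_j=\mathfrak{z}_0+\delta_{ij}\mathfrak{z}_i$ match $u_iu_j=\mathfrak{e}_0+\delta_{ij}\alpha_i^2\mathfrak{e}_i$ only after rescaling each $\mathfrak{e}_i$ by $\alpha_i^{-2}$ (equivalently rescaling $\mathfrak{z}_i$), which is legitimate since the $\mathfrak{e}_i$ are basis vectors of $\mathfrak{Z}_G$; so I would incorporate that scaling into the isomorphism at the outset. All remaining basis products on both sides are $0$, so the map is an algebra isomorphism, and it carries $U_G$ to the $U$-space of $\mathcal{T}_p$ and $\mathfrak{Z}_G$ to its $\mathfrak{Z}$-space, hence is a normal-algebra isomorphism.

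For $q=p$: now the $u_i$, $1\le i\le p-1$, are independent but do not span $U_G$, which is $p$-dimensional. I would argue that $G$ is bipartite: if it were not, it would contain an odd cycle $C$; I'd aim to contradict pair-squareness or the $q=p$ count. In fact $q=p$ with $G$ connected means $G$ is unicyclic, and by Proposition~\ref{PropSqEdge} (applied to an edge of that unique cycle) if the cycle were odd then that edge would be a square, contradicting our standing assumption that no edge alone supports a square; so the cycle is even and $G$ is bipartite. Bipartiteness gives, by Corollary~\ref{CorAnn}, a nonzero $u_0=\sum_{x\in X}x-\sum_{y\in Y}y\in\mathrm{ann}\,G$, and $\dim\mathrm{ann}\,G=1$; then $u_0,u_1,\dots,u_{p-1}$ is a basis of $U_G$. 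The isomorphism $\mathcal{N}G\cong\mathbb{F}^0\oplus\mathcal{T}_{p-1}$ sends $u_0$ to the $\mathbb{F}^0$-generator (which squares to $0$ and kills everything), and $u_i\leftrightarrow w_i$, $\mathfrak{e}_i\leftrightarrow\mathfrak{z}_i$ (with the same $\alpha_i^{-2}$ rescaling) for $1\le i\le p-1$, $\mathfrak{e}_0\leftrightarrow\mathfrak{z}_0$; one checks as before that all products match. Finally, to see $G$ is an even \emph{cycle} (not merely unicyclic with an even cycle), I would use pair-squareness on a pair $\{\mathfrak{e},\mathfrak{f}\}$ where $\mathfrak{e}$ is a pendant edge: following a path out of the leaf as in the proof of Proposition~\ref{PropSqEdge}, the $\lambda$-alternation argument forces the whole graph to consist of that one cycle, since any extra pendant structure would make some edge a bridge with a bipartite side, hence a square --- contradiction. (Alternatively: a connected unicyclic graph with a pendant edge $\mathfrak{e}$ has $\mathfrak{e}$ a bridge and one side a tree, hence bipartite, so $\mathfrak{e}$ is a square by Proposition~\ref{PropSqEdge}(2), again a contradiction; so $G$ has no leaves, forcing $G$ to be a single cycle.)

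The main obstacle I anticipate is the case $q=p$: extracting ``$G$ is an even cycle'' cleanly, rather than just ``unicyclic with even cycle,'' requires the extra no-leaf argument, and one has to be careful that the ambient hypothesis ``$G$ is not edge-square and no edge supports a square'' is used in exactly the right place. The field restriction (characteristic $0$ or $p<2+\mathrm{char}\,\mathbb{F}$) is needed for Theorem~1 to guarantee the $\lambda_{\mathfrak{e}}$ are the only short functionals, which underpins the support calculus; I would state it is in force. The $\alpha_i^{-2}$ rescaling is routine but must be flagged so the isomorphism is stated correctly the first time.
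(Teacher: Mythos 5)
Your proposal is correct and follows essentially the same route as the paper: bound $q$ by the linear independence of the $u_i$ together with connectivity, rule out trees, show in the $q=p$ case that $G$ is a leafless unicyclic graph with an even cycle (via Proposition~\ref{PropSqEdge} and the standing ``no single edge is a square'' reduction), split off $\mathrm{ann}\,G$ using Corollary~\ref{CorAnn}, and define the isomorphism by $u_i\mapsto w_i$, $\mathfrak{e}_0\mapsto\mathfrak{z}_0$, $\mathfrak{e}_i\mapsto\alpha_i^{-2}\mathfrak{z}_i$. The only differences are cosmetic (you derive ``even'' before ``no leaves,'' and your detour through the incidence-matrix rank for $q\le p+1$ is unnecessary since $q-1\le p$ already follows from the independence of the $u_i$); your explicit $\alpha_i^{-2}$ rescaling and the target $\mathcal{T}_{p-1}$ match the paper's intent exactly.
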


\begin{proof}
We saw above that $p\geq q-1$, so that $p-1\leq q\leq p+1$, since $G$ is
connected. The case $q=p-1$ is excluded, as then $G$ is a tree and
edge-square. If $q=p$, $G$ is unicyclic. A terminal edge supports the square
of its degree 1 vertex, making $G$ edge-square again. So $G$ is a cycle. If $%
p$ is odd, Proposition \ref{PropSqEdge} implies that $G$ is still
edge-square. Thus $p$ must be even. In that case, $\dim \mathrm{ann}G=1$, by
Corollary \ref{CorAnn}. Then $U_{G}=\mathrm{ann}G\oplus \left\langle
u_{1},\ldots ,u_{p-1}\right\rangle $, the $u_{i}$ as preceding the
definition of $\mathcal{T}_{p}$ (\ref{EqTp}). The subspace $\left\langle
u_{1},\ldots ,u_{p-1}\right\rangle +\mathfrak{Z}_{G}$ is a subalgebra
isomorphic to $\mathcal{T}_{p}$ by the correspondence $u_{i}\longrightarrow
w_{i}$, $\mathfrak{e}_{0}\longrightarrow \mathfrak{z}_{0}$, and $\mathfrak{e}%
_{i}\longrightarrow \mathfrak{\alpha }_{i}^{-2}\mathfrak{z}_{i}$, $1\leq
i\leq p-1$. That shows $\mathcal{N}G$ to be isomorphic to $\mathbb{F}%
^{0}\oplus \mathcal{T}_{q}$.

For $q=p+1$, $\mathrm{ann}G=0$, since now $U_{G}=\left\langle u_{1},\ldots
,u_{p}\right\rangle $. The same correspondence shows that $\mathcal{N}G$ is
isomorphic to $\mathcal{T}_{p}$.
\end{proof}

To see that for an even cycle $C$, $\mathcal{N}C$ really is a pair-square
graph, index the vertices around $C$ as $x_{0},\ldots ,x_{q-1}$. Reading
indices modulo $q$, we first have $x_{i}^{2}=[x_{i-1},x_{i}]+\left[
x_{i},x_{i+1}\right] $. For $[x_{i-1},x_{i}]$ and $\left[ x_{j},x_{j+1}%
\right] $ disjoint,%
\[
(x_{i}-x_{i+1}+x_{i+2}-\ldots +(-1)^{j-i}x_{j})^{2}=[x_{i-1},x_{i}]+\left[
x_{j},x_{j+1}\right] . 
\]%
Thus all pairs of edges do support squares.

What about pair-square graphs $G$ with $q=p+1$?

\begin{theorem}
\label{ThmPaddle}Let $G$ be a pair-square connected graph for which $q=p+1$.
(Such a graph is not edge-square.) Then $G$ is a doubly-odd \emph{paddle
graph}; that is, it has two edge-disjoint odd cycles either sharing one
vertex or connected by a path. Moreover, any such graph is pair-square.
\end{theorem}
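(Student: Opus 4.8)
The plan is to prove the two directions of Theorem \ref{ThmPaddle} separately. For the forward direction, I would start from the structural constraints already assembled: $G$ is connected, $q=p+1$, and (as established just before the definition of $\mathcal{T}_p$) there exist linearly independent $u_1,\dots,u_p\in U_G$ with $u_i^2=\mathfrak{e}_0+\alpha_i^2\mathfrak{e}_i$ and $u_iu_j=\mathfrak{e}_0$ for $i\neq j$. Since these $p$ vectors are independent in the $p$-dimensional space $U_G$, they span it, forcing $\mathrm{ann}\,G=0$. By Proposition \ref{PropIncRank}, $\mathrm{rank}\,I(G)=p$, so $G$ has no bipartite component; being connected, $G$ is non-bipartite, hence contains an odd cycle. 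Because $q=p+1$, the cycle rank (first Betti number) of $G$ is $q-p+1=2$, so $G$ has exactly two independent cycles. The goal is to show these two cycles can be chosen to be \emph{edge-disjoint and both odd}, which is exactly the doubly-odd paddle condition.

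**Forward direction, main argument.** I would argue as follows. Take a spanning tree $T$ of $G$; the two non-tree edges $\mathfrak{f}_1,\mathfrak{f}_2$ determine fundamental cycles $C_1,C_2$. If both $C_i$ are odd and edge-disjoint, we are essentially done (the graph is then a tree-like arrangement; I'd check it collapses to the paddle shape by suppressing degree-2 vertices and noting leaves would give edge-square contradictions as in the proof of Theorem \ref{ThmPairSq}). The real work is ruling out the alternatives. If some fundamental cycle, say $C_1$, were \emph{even}, then $G-\mathfrak{f}_1$ would still contain $C_2$; I want to produce an element of $\mathrm{ann}\,G$ or violate the square condition. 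The cleanest route: use Proposition \ref{PropSqEdge} — since every edge $\mathfrak{e}$ lies in \emph{some} square support (take a pair containing it), for each edge either $\mathfrak{e}$ is in an odd cycle with $G-\mathfrak{e}$ bipartite, or $\mathfrak{e}$ is a bridge with a bipartite side. Combined with cycle rank $2$ and non-bipartiteness, a short case analysis on where the two cycles sit (disjoint, sharing one vertex, sharing a path, or nested) shows only the edge-disjoint configuration survives, and within that, sharing-a-path forces the connecting path and sharing-one-vertex is the degenerate path of length $0$; parity of each cycle must be odd because an even cycle would make its non-tree edge $\mathfrak{e}$ fail the $G-\mathfrak{e}$-bipartite-or-bridge dichotomy (an even cycle edge is not a bridge, and deleting it leaves the \emph{other} odd cycle, so $G-\mathfrak{e}$ is non-bipartite).

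**Converse direction.** For the converse I would give an explicit construction of the required squares, mimicking the even-cycle computation right after Theorem \ref{ThmPairSq}. Label the doubly-odd paddle: odd cycle $A$ on vertices $a_0,\dots,a_{2k}$, odd cycle $B$ on $b_0,\dots,b_{2\ell}$, joined by a path $P$ from $a_0$ to $b_0$ (possibly $a_0=b_0$). For a single odd cycle, Proposition \ref{PropSqEdge} already gives that each of its edges is a square; the point is to handle \emph{pairs}. Given any two edges $\mathfrak{e},\mathfrak{f}$, I need $u$ with $u^2=\mathfrak{e}+\alpha^2\mathfrak{f}$, equivalently $\lambda_{\mathfrak{e}}(u)\neq 0$, $\lambda_{\mathfrak{f}}(u)\neq 0$, and $\lambda_{\mathfrak{g}}(u)=0$ for all other edges $\mathfrak{g}$. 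Concretely: assign $\theta$-values on vertices so that $\theta_x+\theta_y=0$ along every edge except $\mathfrak{e}$ and $\mathfrak{f}$. Since removing $\mathfrak{e}$ and $\mathfrak{f}$ from a doubly-odd paddle (whatever their location — both on $A$, both on $B$, one on $A$ and one on $P$, etc.) leaves a \emph{bipartite} graph (we've destroyed at least enough odd-cycle edges, and the remaining cycles, if any, are even or broken), a proper $\pm1$ bipartition of the components of $G-\{\mathfrak{e},\mathfrak{f}\}$ gives the needed $u$, possibly with a leftover annihilator-style freedom to kill unwanted terms. This requires checking a handful of placement cases, but each is the same bipartite-labeling move.

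**Main obstacle.** The hard part is the forward direction's case analysis showing the two cycles must be edge-disjoint and both odd — in particular ruling out two odd cycles sharing an edge or a longer common subpath, and ruling out a single configuration where deleting a cleverly chosen edge still leaves things bipartite in a way that would permit $q=p+1$ without the paddle shape. I expect to lean heavily on Proposition \ref{PropSqEdge} applied edge-by-edge, plus the rigidity that $\mathrm{ann}\,G=0$ (equivalently, the only vertex labeling with $\theta_x+\theta_y=0$ on every edge is the zero labeling), to force the global structure. The converse, by contrast, is routine once the bipartiteness of $G-\{\mathfrak{e},\mathfrak{f}\}$ is organized by cases.
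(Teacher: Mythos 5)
Your skeleton for the forward direction (cycle rank $2$, spanning tree, fundamental cycles) is workable, but the step you lean on is inverted. You assert that ``since every edge $\mathfrak{e}$ lies in some square support, for each edge either $\mathfrak{e}$ is in an odd cycle with $G-\mathfrak{e}$ bipartite, or $\mathfrak{e}$ is a bridge with a bipartite side.'' Proposition \ref{PropSqEdge} characterizes the edges $\mathfrak{e}$ for which $u^{2}=\mathfrak{e}$ exactly, i.e.\ weight-$1$ supports; membership in a weight-$2$ support gives no such conclusion. In the setting of this theorem the standing assumption from Section \ref{SectSq2} is precisely that \emph{no} edge alone is a square, so every edge must \emph{fail} both conditions of Proposition \ref{PropSqEdge}. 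Hence your parity argument proves nothing: showing that the even cycle's non-tree edge fails the dichotomy is consistent with the hypotheses, not a contradiction. The contradiction has to come from exhibiting an edge that \emph{satisfies} the dichotomy --- for instance, if one cycle were even, any edge $\mathfrak{e}$ of the \emph{odd} cycle has $G-\mathfrak{e}$ bipartite (its cycle space is generated by the even cycle) while lying on an odd cycle, so $\mathfrak{e}$ would be a square. The same device disposes of the theta configuration (two of the three paths between the two degree-$3$ vertices have equal parity, so deleting an edge of the third leaves a bipartite graph while that edge lies on an odd cycle), which is exactly the case you defer to ``a short case analysis'' without carrying it out. The paper reaches the structural classification more cheaply than fundamental cycles: from $\sum(\delta_{i}-2)=2$ with all $\delta_{i}\geq 2$ one gets either two vertices of degree $3$ or one of degree $4$, and then the three-path parity pigeonhole.

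In the converse, your blanket claim that $G-\{\mathfrak{e},\mathfrak{f}\}$ is bipartite ``whatever their location'' is false: if $\mathfrak{e}$ and $\mathfrak{f}$ both lie on cycle $A$, the odd cycle $B$ survives intact. In that case any admissible labeling must vanish identically on $B$ and on the connecting path (an odd cycle admits only the zero alternating labeling), and one must then check that the forced alternating labeling on the two arcs of $A-\{\mathfrak{e},\mathfrak{f}\}$ can be arranged to give nonzero sums across both $\mathfrak{e}$ and $\mathfrak{f}$; this is where the oddness of $A$ enters and is the case needing genuine care. The paper itself dismisses the converse as a ``coefficient assignment verification,'' so the level of detail is comparable, but your stated reason for why the verification works would lead you astray in that case.
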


\begin{proof}
Graph $G$ has no terminal edges (they are squares), so all vertex degrees
are at least 2. Those degrees being $\delta _{1},\ldots ,\delta _{p}$, $%
\sum_{i=1}^{p}\delta _{i}=2q$ gives $\sum_{i=1}^{p}(\delta _{i}-2)=2$. Then
on renumbering, either $\delta _{1}$ and $\delta _{2}$ are both $3$ and $%
\delta _{i}=2$ for $i\geq 3$, or else $\delta _{1}=4$ and $\delta _{i}=2$, $%
i\geq 2$. In the first case, vertices $x_{1}$ and $x_{2}$ might be connected
by three paths. But then two of the path lengths would have the same parity.
Removing an edge from the third path leaves a bipartite graph, making that
edge a square by Proposition \ref{PropSqEdge}. From the comments above, that
would show $G$ to be edge-square, which is excluded. So $x_{1}$ and $x_{2}$
are on single cycles joined by a path between $x_{1}$ and $x_{2}$. Both
cycles are odd, by the same squared-edge argument. In the second case, if $%
x_{1}$ has degree $4$, it is on two cycles meeting just at $x_{1}$. Again,
both cycles must be odd.

That a doubly-odd paddle graph is pair-square is a matter of coefficient
assignment verification, along the lines of the argument for even cycles
above.
\end{proof}

The classical \textquotedblleft butterfly\textquotedblright\ graph is the
doubly-odd paddle graph of order 5. All doubly-odd paddle graphs of the same
order $p$ have isomorphic normal algebras $\mathcal{T}_{p}$. The number of
such graphs is the number of pairs $\left\{ m,n\right\} $ of odd integers $%
n,m\geq 3$ with $n+m\leq p+1$, $n=m$ allowed. The sequence of counts,
starting at $p=5$, is presented in \cite[Sequence A008642]{S}. (The number
of unrestricted paddle graphs is also there \cite[Sequence A033638]{S}.)

\section{Edge coherence\label{SectEdgeCoherence}}

A set of distinct edges $\mathfrak{e}_{1},\ldots ,\mathfrak{e}_{n}$ in a
graph $G$ is called \emph{coherent} if there is a matching set of scalars $%
\alpha _{1},\ldots ,\alpha _{n}$, not all $0$, such that at each vertex of $%
G $, the sum of the scalars for the edges incident with that vertex is $0$.
The coherence is \emph{proper} if none of the $\alpha _{i}$ is $0$. Although
this concept seems to require knowledge of the edge-vertex incidence
relation, it actually depends only on the normal graph algebra $\mathcal{N}G$%
:

\begin{lemma}
\label{LemCohere}Let $\mathfrak{e}_{1},\ldots ,\mathfrak{e}_{n}$ be a set of
distinct edges of the graph $G$. Let $\lambda _{1},\ldots ,\lambda _{n}$ be
corresponding short functionals. Then the $\mathfrak{e}_{i}$ are coherent if
and only if the $\lambda _{i}$ are linearly dependent.
\end{lemma}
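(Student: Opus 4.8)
The plan is to translate the combinatorial notion of coherence directly into the language of the functionals $\lambda_{\mathfrak{e}}$ already developed. Recall from the definition of $\lambda_{\mathfrak{e}}$ that for a vertex $x$, $\lambda_{\mathfrak{e}}(x)$ equals $1$ if $x$ is an endpoint of $\mathfrak{e}$ and $0$ otherwise. Hence for fixed scalars $\alpha_1,\dots,\alpha_n$, evaluating the functional $\sum_{i=1}^{n}\alpha_i\lambda_{\mathfrak{e}_i}$ at a vertex $x$ yields exactly $\sum_{i:\,x\in\mathfrak{e}_i}\alpha_i$, the sum of the scalars over the edges incident with $x$. So the condition that this sum vanishes at every vertex of $G$ is precisely the condition that $\sum_{i=1}^{n}\alpha_i\lambda_{\mathfrak{e}_i}=0$ as an element of $\widehat{U_G}$ (since a functional vanishes iff it vanishes on the basis of vertices).

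First I would make this observation explicit, writing $\big(\sum_i\alpha_i\lambda_{\mathfrak{e}_i}\big)(x)=\sum_{i:\,x\in\mathfrak{e}_i}\alpha_i$ for each vertex $x$. Then coherence of $\mathfrak{e}_1,\dots,\mathfrak{e}_n$ — the existence of scalars $\alpha_i$, not all zero, with all these vertex-sums zero — says exactly that there is a nontrivial linear relation $\sum_i\alpha_i\lambda_{\mathfrak{e}_i}=0$, i.e. that $\lambda_1,\dots,\lambda_n$ are linearly dependent. This is the whole content of the lemma, so in the forward direction and the reverse direction the argument is the same single equivalence read both ways.

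One small point to address: the lemma allows the $\lambda_i$ to be arbitrary short functionals for the edges $\mathfrak{e}_i$, not necessarily the canonical $\lambda_{\mathfrak{e}_i}$ normalized to take value $1$ on the endpoints. By the Theorem of Section 4, any short functional for $\mathfrak{e}_i$ is a nonzero scalar multiple $c_i\lambda_{\mathfrak{e}_i}$. Replacing $\lambda_{\mathfrak{e}_i}$ by $c_i\lambda_{\mathfrak{e}_i}$ merely rescales each $\alpha_i$ by $c_i^{-1}$, which preserves both "not all zero" and linear (in)dependence; so the statement is insensitive to the choice of short functionals, and I would remark on this so the reader sees the claim is well-posed.

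There is no real obstacle here: the lemma is essentially a restatement, and the only thing to be careful about is that the excerpt has established $\lambda_{\mathfrak{e}}(x)\in\{0,1\}$ according to incidence, which is exactly the bridge needed. If anything, the "main step" is simply recognizing that the coherence condition is a system of linear equations indexed by vertices whose coefficient matrix is the incidence matrix $I(G)$ — and that the columns of $I(G)$ are precisely the coordinate vectors of the $\lambda_{\mathfrak{e}_i}$ in the vertex basis of $\widehat{U_G}$ — after which the equivalence of "nontrivial solution exists" with "columns linearly dependent" is immediate from linear algebra.
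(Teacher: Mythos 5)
Your proposal is correct and follows essentially the same route as the paper's proof: reduce to the canonical functionals $\lambda_{\mathfrak{e}_i}$ by noting that rescaling does not affect dependence, then observe that $\bigl(\sum_i \alpha_i \lambda_{\mathfrak{e}_i}\bigr)(x)$ is exactly the sum of the scalars over the edges incident with $x$, so vanishing at every vertex is precisely the coherence condition. The remark identifying the coefficient matrix with the incidence matrix $I(G)$ is a nice aside but adds nothing beyond what the paper's argument already contains.
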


\begin{proof}
Scaling the $\lambda _{i}$ does not affect their dependence, so we may
assume that $\lambda _{i}=\lambda _{\mathfrak{e}_{i}}$. Then $\sum \alpha
_{i}\lambda _{i}=0$ is a dependence just when at each vertex $x$, $\sum
\alpha _{i}\lambda _{i}(x)=0$. This in turn means that $\sum \alpha _{j}=0$,
summed over the edges $\mathfrak{e}_{j}$ incident with $x$. That is just the
condition for coherence, with $\alpha _{i}$ matching $\mathfrak{e}_{i}$.
\end{proof}

\begin{definition}
\label{DefMinCo}\emph{A graph }$G$\emph{\ is called} minimally coherent 
\emph{if its edges are coherent, but no proper subset of them is.}
\end{definition}

For instance, the assignment of $1$ and $-1$ alternately to the edges of an
even cycle shows it to be coherent. But no proper subset of edges is
coherent, since its edge-induced subgraph contains terminal edges that could
not be assigned nonzero scalars. The smallest minimally coherent graph is a
4-cycle.

As the dimension of $\widehat{U_{G}}$ is $p$, the order of $G$, any set of $%
p+1$ or more edges is coherent. So if $G$ is minimally coherent, its size $q$
is at most $p+1$.

\begin{theorem}
\label{ThmMinCo}The minimally coherent graphs are the even cycles and the
doubly-odd paddle graphs of Theorem \ref{ThmPaddle}.
\end{theorem}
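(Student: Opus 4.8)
The plan is to characterize minimally coherent graphs by combining the incidence-matrix rank formula (Proposition \ref{PropIncRank}) with the structural results already obtained for pair-square graphs (Theorems \ref{ThmPairSq} and \ref{ThmPaddle}), via the translation in Lemma \ref{LemCohere}. First I would set up the linear-algebra picture: by Lemma \ref{LemCohere}, the edges $\mathfrak{e}_1,\ldots,\mathfrak{e}_q$ of $G$ are coherent exactly when the corresponding short functionals $\lambda_1,\ldots,\lambda_q\in\widehat{U_G}$ are linearly dependent, and $G$ is minimally coherent exactly when this full set is dependent but every proper subset is independent. Since the $\lambda_i$ span a subspace of $\widehat{U_G}$ and the columns of $I(G)$ are (scalings of) the $\lambda_i$, the $\lambda_i$ span a space of dimension $\operatorname{rank}I(G)=p-k_b(G)$ by Proposition \ref{PropIncRank}. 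Independence of every proper subset means the $\lambda_i$ form a \emph{circuit} in the matroid sense: $q-1$ of them are independent, so $q-1\le p-k_b(G)$, and the whole set is dependent, so the $\lambda_i$ span exactly a $(q-1)$-dimensional space, giving $q-1=p-k_b(G)$, i.e. $q=p+1-k_b(G)$.

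Next I would argue that $G$ must be connected. If $G$ had a component $H$ not containing all the edges, the coherence relation $\sum\alpha_i\lambda_i=0$ restricts to each component; the edges in $H$ would then be a proper coherent subset unless $H$ is all of $G$ or $H$ is edgeless — but edgeless components contribute no edges. More carefully: because every $\lambda_i$ is supported on the two endpoints of $\mathfrak{e}_i$, the functionals attached to edges in different components are "orthogonal" in the sense that a dependence among all of them forces a dependence within each component's edge set separately; minimality then forces a single component carrying edges, and isolated vertices can be discarded (they only change $k_b$ by adding edgeless bipartite components, which do not affect the edge set). So assume $G$ connected. Then $k_b(G)\in\{0,1\}$, whence $q=p$ or $q=p+1$.

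Now I would show that a minimally coherent connected $G$ is pair-square, so that Theorems \ref{ThmPairSq} and \ref{ThmPaddle} apply. Minimality of the circuit $\{\lambda_i\}$ means that deleting any one $\lambda_k$ leaves an independent set, i.e. the remaining $q-1$ functionals are a basis of their span. Given any two edges $\mathfrak{e}_i,\mathfrak{e}_j$, I want a $u\in U_G$ with $\lambda_{\mathfrak{e}_i}(u)\ne 0$, $\lambda_{\mathfrak{e}_j}(u)\ne 0$, and $\lambda_{\mathfrak{e}_\ell}(u)=0$ for all other $\ell$; by \eqref{EqnProdLambda} such a $u$ has $\operatorname{supp}(u^2)=\{\mathfrak{e}_i,\mathfrak{e}_j\}$. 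Removing $\mathfrak{e}_i$ and $\mathfrak{e}_j$ from the circuit leaves $q-2$ functionals that are independent and hence do not span $\widehat{U_G}$ (their span has dimension $q-2\le p-1<p$), so there is $0\ne\psi\in U_G$ (using $U_G\cong\widehat{\widehat{U_G}}$, or just choosing $u$ in the annihilator of those $q-2$ functionals within $U_G$) with $\lambda_\ell(u)=0$ for $\ell\ne i,j$. If $\lambda_{\mathfrak{e}_i}(u)=0$ as well, then $u$ annihilates $q-1$ of the $\lambda$'s, which span a $(q-1)=(p-k_b)$-dimensional space; when $k_b=1$ that is a hyperplane and $u$ could be forced into $\operatorname{ann}G$, but minimality (the full set is dependent) then forces $\lambda_{\mathfrak{e}_j}(u)=0$ too, contradicting $u\ne 0$ unless $u\in\operatorname{ann}G$ — and one checks $\operatorname{ann}G$ has dimension $k_b\le 1$, so generically we can perturb to make both values nonzero. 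This dimension-counting step — ensuring both $\lambda_{\mathfrak{e}_i}(u)\ne0$ and $\lambda_{\mathfrak{e}_j}(u)\ne0$ rather than accidentally landing in a smaller support — is the main obstacle, and it is exactly where the circuit/minimality hypothesis does the work: a proper sub-dependence would let the support shrink.

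Finally, once $G$ is known to be connected and pair-square with $q=p$ or $q=p+1$, Theorem \ref{ThmPairSq} gives that $q=p$ forces $G$ to be an even cycle and $q=p+1$ forces (via Theorem \ref{ThmPaddle}) that $G$ is a doubly-odd paddle graph. Conversely, the remarks after Definition \ref{DefMinCo} already show even cycles are minimally coherent, and for a doubly-odd paddle graph $G$ one has $q=p+1$, so the $q$ functionals live in the $p$-dimensional $\widehat{U_G}$ and are dependent; since $G$ has no terminal edges and deleting any edge from a doubly-odd paddle graph yields a graph in which the remaining edge-induced subgraph still has minimum degree behavior preventing a smaller coherent set (equivalently, $\operatorname{rank}I(G-\mathfrak{e})=p-k_b(G-\mathfrak{e})$ computations show the remaining $p$ functionals are independent), no proper subset is coherent. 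Thus the minimally coherent graphs are precisely the even cycles and the doubly-odd paddle graphs, completing the proof.
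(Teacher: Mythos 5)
Your proposal is essentially correct, but it reaches the classification by a genuinely different route from the paper. The paper's argument is direct and combinatorial: after establishing connectedness and $q\in\{p,p+1\}$, it handles $q=p$ by noting that a minimally coherent unicyclic graph has no terminal edges, hence is a cycle, and that the only candidate coherence on a cycle is the alternating $\pm1$ assignment, which succeeds only for even length; for $q=p+1$ it reruns the degree-sequence argument of Theorem \ref{ThmPaddle}, using the absence of even cycles (which would give smaller coherent edge sets), and for the converse it reads off the single dependence $\lambda_0=\sum\lambda_i$ from the classification of short functionals of $\mathcal{T}_p$. You instead interpret minimal coherence as saying the $\lambda_{\mathfrak{e}}$ form a circuit, extract $q=p+1-k_b(G)$ from Proposition \ref{PropIncRank}, and then prove $G$ is pair-square so that Theorems \ref{ThmPairSq} and \ref{ThmPaddle} can be invoked as black boxes. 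That reduction is sound, and the step you flag as the ``main obstacle'' closes more cleanly than your ``generic perturbation'' language suggests: the subspace $A=\{u:\lambda_\ell(u)=0,\ \ell\neq i,j\}$ has dimension $p-q+2$, and since any $q-1$ members of a circuit are independent and span the same $(q-1)$-dimensional space as the whole set, the locus in $A$ where $\lambda_i$ (equivalently $\lambda_j$) vanishes is exactly $\mathrm{ann}G$, of dimension $p+1-q$; any $u\in A$ outside $\mathrm{ann}G$ therefore has $\mathrm{supp}(u^2)=\{\mathfrak{e}_i,\mathfrak{e}_j\}$. Two small points you should make explicit: why a minimally coherent graph is not edge-square (connected edge-square graphs have algebra $\mathcal{O}_{p-1}$ or $\mathcal{O}_p$, whose short functionals are independent), so that Theorem \ref{ThmPairSq} actually applies; and, in your converse for paddle graphs, the check that $k_b(G-\mathfrak{e})=0$ for every edge $\mathfrak{e}$ (removing a path edge leaves two components each containing an odd cycle; removing a cycle edge leaves a connected graph still containing the other odd cycle). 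What the paper's route buys is brevity and concreteness; what yours buys is a uniform rank/circuit mechanism that explains why the minimally coherent graphs are exactly the non-edge-square pair-square graphs.
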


\begin{proof}
The minimality of an even cycle was just noted. A doubly-odd paddle graph of
order $p$ has normal algebra isomorphic to $\mathcal{T}_{p}$ (Theorems \ref%
{ThmPairSq} and \ref{ThmPaddle}). The short functionals of $\mathcal{T}_{p}$
in Lemma \ref{LemPsiTp} satisfy $\lambda _{0}=\sum_{i=1}^{p}\lambda _{i}$,
but no proper subset of them is linearly dependent. Consequently, doubly-odd
paddle graphs are minimally coherent.

Conversely, let $G$ be a minimally coherent graph of order $p$ and size $q$.
Then $G$ is connected, since a coherence requires a coherence in at least
one component. So $q=p$ or $q=p+1$. At $q=p$, $G$ is unicyclic with no
terminal edges and so a cycle. Up to scaling, the only possible coherence in
a cycle is the alternating assignment of $1$ and $-1$ to its edges going
around it, and that does not work if the cycle is odd. Thus $G$ is an even
cycle.

Now let $q=p+1$. The proof runs as in Theorem \ref{ThmPaddle}. Once again, $%
G $ cannot contain any even cycles, because they would be graphs of lower
size with coherent edges. Thus $G$ is indeed a doubly-odd paddle graph.
\end{proof}

Incidentally, this theorem implies a variant of a well-known one of P\'{o}sa 
\cite{EP} on the existence of pairs of edge-disjoint cycles:

\begin{corollary}
\label{CorMinCo}Let $G$ be a graph of order $p$ and size $q$ with $p\geq 4$
and $q\geq p+1$. Then $G$ contains either an even cycle or a pair of
edge-disjoint odd cycles.
\end{corollary}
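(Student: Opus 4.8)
The plan is to reduce the statement directly to Theorem~\ref{ThmMinCo} by extracting a minimal substructure. First I would observe that it suffices to prove the claim for a graph $G$ that is \emph{edge-minimal} subject to having order $\geq 4$ and size $\geq p+1$; more precisely, I would pass to a subgraph on which coherence first appears. The key elementary fact is that any graph with $q\geq p+1$ has $q>p$, hence (deleting isolated vertices only lowers $p$ without lowering $q$, and the cycle-space dimension is $q-p+k\geq 2$ when $k$ counts components, or more simply) the edge set is linearly dependent in the sense of Lemma~\ref{LemCohere}: since $\dim\widehat{U_G}=p$ and we have $q\geq p+1>p$ short functionals $\lambda_{\mathfrak e}$, they are linearly dependent, so by Lemma~\ref{LemCohere} the full edge set of $G$ is coherent.

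Next I would take a \emph{minimal} coherent subset of edges $\{\mathfrak e_1,\dots,\mathfrak e_n\}$ of $G$ and let $H$ be the subgraph of $G$ induced by these edges (together with their endpoints). By construction the edges of $H$ are coherent but no proper subset is, so $H$ is minimally coherent in the sense of Definition~\ref{DefMinCo} — here one must check that the coherence condition for the edge subset, read inside $H$, is exactly the condition that $H$ is minimally coherent, which is immediate since the incidence relations among $\mathfrak e_1,\dots,\mathfrak e_n$ are the same whether viewed in $G$ or in $H$, and every vertex of $H$ is incident with at least one of the $\mathfrak e_i$. Now Theorem~\ref{ThmMinCo} applies to $H$: it is either an even cycle or a doubly-odd paddle graph. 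In the first case $H$, hence $G$, contains an even cycle. In the second case, by the description in Theorem~\ref{ThmPaddle}, $H$ contains two edge-disjoint odd cycles, and these sit inside $G$ as well.

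The one genuine gap to fill is showing $n\geq 2$ is not enough — I need to make sure the minimal coherent subset is nonempty and in fact that $H$ has order $\geq 4$ is not actually required for the conclusion; what I really need is only that $H$ is one of the two listed types, and both types automatically contain either an even cycle or a pair of edge-disjoint odd cycles regardless of size. (The smallest even cycle is $C_4$ and the smallest doubly-odd paddle graph is the butterfly of order $5$, so in fact $H$ always has order $\geq 4$, but this is a free byproduct, not a hypothesis I must verify.) The hypothesis $p\geq 4$ on $G$ is used only to guarantee we are not in a degenerate small case where $q\geq p+1$ fails to force anything — but really, once $q\geq p+1$ we already have the dependence of the $\lambda_{\mathfrak e}$, so the $p\geq 4$ is just there to make the statement non-vacuous; I would keep it as stated.

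The main obstacle is the bookkeeping in the reduction step: verifying that ``minimal coherent subset of the edges of $G$'' really does yield a subgraph that satisfies Definition~\ref{DefMinCo}, i.e.\ that passing to the edge-induced subgraph does not create or destroy coherences. This is routine but needs the observation that coherence is an intrinsically edge-incidence notion (or, via Lemma~\ref{LemCohere}, that linear dependence of $\{\lambda_{\mathfrak e_i}\}$ is unchanged when these functionals are restricted to $U_H\subseteq U_G$, since each $\lambda_{\mathfrak e_i}$ is supported on the endpoints of $\mathfrak e_i$, all of which lie in $VH$). Once that is in hand, Theorem~\ref{ThmMinCo} does all the work.
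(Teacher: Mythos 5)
Your proposal is correct and follows exactly the paper's argument: the dimension count $q\geq p+1>p=\dim\widehat{U_G}$ forces the short functionals to be linearly dependent, hence the edge set is coherent by Lemma~\ref{LemCohere}; a minimal coherent subset induces a minimally coherent subgraph, and Theorem~\ref{ThmMinCo} identifies it as an even cycle or a doubly-odd paddle graph. The only difference is that you spell out the (routine) verification that coherence and minimality transfer to the edge-induced subgraph, which the paper leaves implicit.
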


\begin{proof}
As pointed out, the edges of $G$ are coherent. A minimal coherent set of
edges induces a minimally coherent subgraph of $G$ providing the needed
cycle set. (A direct proof makes a good exercise!)
\end{proof}

\section{Automorphisms\label{SectAut}}

We plan to investigate automorphisms of normal graph algebras in a later
paper. Here we present a few comments. The automorphism group $\mathrm{aut}G$%
, the group of permutations of $VG$ whose induced actions on 2-element
subsets of $VG$ permute edges, induces automorphisms of $\mathcal{N}G$. We
call these \emph{graphical automorphisms} and also refer to their set as $%
\mathrm{aut}G$. Other automorphisms are \emph{nongraphical}. There are \emph{%
scalar automorphisms}, maps scaling the members of $U_{G}$ by a nonzero
scalar $\alpha $ and members of $\mathfrak{Z}_{G}$ by $\alpha ^{2}$. Their
subgroup of the automorphism group $\mathrm{aut}\mathcal{N}G$ of $\mathcal{N}%
G$ will be denoted $\mathbb{F}_{G}^{\#}$ ($\mathbb{F}^{\#}$ is the set of
nonzero members of $\mathbb{F}$).

For $g\in \mathrm{aut}\mathcal{N}G$ and a short homomorphism with
functionals $\lambda $ and $\mu $, define $g\varphi $ to be $\varphi \circ
g^{-1}$, $g\lambda =\lambda \circ g^{-1}$, and $g\mu =\mu \circ g^{-1}$.
Then $g\varphi $ is also a short homomorphism, with functionals $g\lambda $
and $g\mu $. Thus $g$ permutes the spans of the members of $\mathrm{M}$ and
so preserves weights: $\mathrm{wt}(g\mathfrak{z)=}\mathrm{wt}\left( 
\mathfrak{z}\right) $, $\mathfrak{z}\in \mathfrak{Z}_{G}$. The edges being
proportional to the $\mathfrak{z}\in \mathfrak{Z}_{G}$ of weight 1, $g$
permutes the edge spans $\left\langle \mathfrak{e}\right\rangle $. (For $%
g\in \mathrm{aut}G$, this is automatic.)

If $H$ is a spanning subgraph of $G$, one with $VH=VG$, $\mathcal{N}H$ is
not necessarily a subalgebra of $\mathcal{N}G$. But it is a quotient, by the
map $x\longrightarrow x$ for $x\in VG$, $\mathfrak{e}\longrightarrow 
\mathfrak{e}$ for $\mathfrak{e}\in EH$, and $\mathfrak{e}\longrightarrow 0$
for $\mathfrak{e}\notin EH$. The kernel is the rather trivial ideal $%
\left\langle EG-EH\right\rangle $, for which $w\left\langle
EG-EH\right\rangle =0$ for all $w\in \mathcal{N}G$. An automorphism $g$ of $%
\mathcal{N}G$ that permutes the edge spans of $EH$ (and so of $EG-EH$)
induces an automorphism $g|H$ of $\mathcal{N}H$.

An \emph{edge-scaling} automorphism is one that scales each edge. One might
hope that such an automorphism is scalar, but that may not be so. For
example, the automorphism group of $\mathcal{O}_{p}$ (\ref{EqnOp}) is
isomorphic to the monomial group on $\left\langle w_{1},\ldots
,w_{p}\right\rangle $. If the monomial matrix is $\left[ \alpha _{ij}\right] 
$, the matrix for the action on $\left\langle \mathfrak{z}_{1},\ldots ,%
\mathfrak{z}_{p}\right\rangle $ is $\left[ \alpha _{ij}^{2}\right] $. In
particular, diagonal transformations can scale the edges by arbitrary
nonzero squares.

Let $g$ be an edge-scaling automorphism. Then for each edge $\mathfrak{e}$, $%
g\lambda _{\mathfrak{e}}=\varepsilon _{\mathfrak{e}}\lambda _{\mathfrak{e}}$
for some $\varepsilon _{\mathfrak{e}}\in \mathbb{F}^{\#}$. Let $\mathfrak{e}%
_{1},\ldots ,\mathfrak{e}_{n}$ be a minimal coherent set of edges, with
corresponding short$\ $functionals $\lambda _{i}$, and let $\sum \alpha
_{i}\lambda _{i}=0$ be a dependence showing coherence. Applying $g$ gives $%
\sum \alpha _{i}\varepsilon _{\mathfrak{e}_{i}}\lambda _{i}=0$. By the
minimality, it must be that all $\varepsilon _{\mathfrak{e}_{i}}$ are the
same, otherwise some combination of the two dependency sums would have fewer
nonzero terms.

\begin{definition}
\label{DefMinCoConn}Let $\mathcal{MC}$ be the family of minimally coherent
graphs. A graph $G$ is called $\mathcal{MC}$\emph{-edge connected} if for
any two edges $\mathfrak{e}$ and $\mathfrak{f}$ of $G$, there is a sequence $%
G_{1},\ldots ,G_{m}$ of subgraphs $G_{i}$ of $G$ belonging to $\mathcal{MC}$
such that $\mathfrak{e}\in EG_{1}$, $\mathfrak{f}\in EG_{m}$, and $%
EG_{i}\cap EG_{i+1}\neq \emptyset $ for $1\leq i\leq m-1$.
\end{definition}

\noindent If such a graph has no isolated vertices, it is connected.

\begin{proposition}
\label{PropEdgeScal}Let $G$ be an $\mathcal{MC}$-edge connected graph for
which $\mathrm{ann}G=0$ ($G$ thus has no isolated vertices and is not
bipartite). If $g\in \mathrm{aut}\mathcal{N}G$ is edge-scaling, then $g$ is
a scalar automorphism.
\end{proposition}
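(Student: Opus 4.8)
The plan is to leverage the observation made just before Proposition~\ref{PropEdgeScal}: for an edge-scaling automorphism $g$, each $\lambda_{\mathfrak{e}}$ is multiplied by a scalar $\varepsilon_{\mathfrak{e}}$, and within any single minimally coherent subgraph all the $\varepsilon_{\mathfrak{e}}$ must coincide. First I would record that, since membership of an edge in a subgraph $G_i \in \mathcal{MC}$ forces $\varepsilon_{\mathfrak{e}}$ to equal the common value $\varepsilon_{G_i}$ attached to that subgraph, and since consecutive subgraphs in an $\mathcal{MC}$-edge-connecting chain share an edge, all the $\varepsilon_{G_i}$ along the chain are equal. Because $G$ is $\mathcal{MC}$-edge connected, any two edges are joined by such a chain, so there is a single scalar $\varepsilon \in \mathbb{F}^{\#}$ with $g\lambda_{\mathfrak{e}} = \varepsilon \lambda_{\mathfrak{e}}$ for \emph{every} edge $\mathfrak{e}$.

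Next I would transfer this uniformity from the functionals back to the algebra. The relation $g\lambda_{\mathfrak{e}} = \lambda_{\mathfrak{e}} \circ g^{-1} = \varepsilon\lambda_{\mathfrak{e}}$ says that $\lambda_{\mathfrak{e}}(g^{-1}u) = \varepsilon\lambda_{\mathfrak{e}}(u)$ for all $u \in U_G$ and all edges $\mathfrak{e}$. Equivalently, setting $h = g^{-1}$, the linear map $h$ satisfies $\lambda_{\mathfrak{e}}(hu - \varepsilon u) = 0$ for every edge, i.e.\ $hu - \varepsilon u \in \bigcap_{\mathfrak{e}} \ker\lambda_{\mathfrak{e}} = \mathrm{ann}\,G$ by Proposition~\ref{PropAnn}. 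Since $\mathrm{ann}\,G = 0$ by hypothesis, $hu = \varepsilon u$ for all $u$, so $g$ acts on $U_G$ as multiplication by $\varepsilon^{-1}$; write $\alpha = \varepsilon^{-1}$.

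Finally I would pin down the action on $\mathfrak{Z}_G$. Since $G$ has no isolated vertices, every edge $\mathfrak{e}$ is the support of $u^2$ for a suitable $u$ — indeed by~\eqref{EqnProdLambda} one can choose $u$ a scalar multiple of a vertex incident with $\mathfrak{e}$, or more simply note $\mathfrak{Z}_G = \langle EG\rangle = \langle\{u^2 : u \in U_G\}\rangle$ because each edge lies in the support of some vertex-square and these squares together span $\mathfrak{Z}_G$. For any such $u$, $g(u^2) = (gu)^2 = (\alpha u)^2 = \alpha^2 u^2$; since these squares span $\mathfrak{Z}_G$, $g$ scales all of $\mathfrak{Z}_G$ by $\alpha^2$. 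Together with $g|_{U_G} = \alpha\cdot\mathrm{id}$, this is exactly the definition of a scalar automorphism in $\mathbb{F}_G^{\#}$, completing the proof.

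I expect the only genuine subtlety to be the first paragraph — making fully rigorous that the constancy of $\varepsilon$ within each minimally coherent subgraph, combined with the shared-edge chaining, actually propagates to a \emph{global} constant; everything after that is a short deduction via Proposition~\ref{PropAnn} and bilinearity of the product. The parenthetical remark that $\mathrm{ann}\,G = 0$ entails no isolated vertices and non-bipartiteness follows from Corollary~\ref{CorAnn} and the fact that each bipartite component (isolated vertices included) contributes to $\mathrm{ann}\,G$, so it needs no separate argument.
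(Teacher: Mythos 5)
Your argument is correct and follows essentially the same route as the paper: constancy of $\varepsilon$ on each minimally coherent subgraph, propagation along the $\mathcal{MC}$-chains via the shared edges, and then $\mathrm{ann}\,G=0$ together with Proposition~\ref{PropAnn} to force $gu=\varepsilon^{-1}u$. Your final paragraph checking the $\alpha^{2}$ action on $\mathfrak{Z}_{G}$ is a detail the paper leaves implicit; note only that vertex-squares alone need not span $\mathfrak{Z}_{G}$ (e.g.\ for $K_{4}$ they span a $4$-dimensional subspace of the $6$-dimensional $\mathfrak{Z}$), but each edge $[x,y]$ is the product $xy$, so $g[x,y]=(gx)(gy)=\alpha^{2}[x,y]$ directly.
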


\begin{proof}
If $H$ is a minimally coherent subgraph of $G$, then by the preceding
discussion, there is an $\varepsilon _{H}\in \mathbb{F}^{\#}$ for which $%
g\lambda _{\mathfrak{e}}=\varepsilon _{H}\lambda _{\mathfrak{e}}$ for all $%
\mathfrak{e}\in EH$. Then the edge overlap provided by Definition \ref%
{DefMinCoConn} implies the existence of an $\varepsilon \in \mathbb{F}^{\#}$
for which $g\lambda _{\mathfrak{e}}=\varepsilon \lambda _{\mathfrak{e}}$ for 
\emph{all} edges $\mathfrak{e}$ of $G$. Consequently $\lambda _{\mathfrak{e}%
}(g^{-1}u)=\lambda _{\mathfrak{e}}(\varepsilon u)$ for $u\in U_{G}$. Since $%
\mathbf{\cap }_{\mathfrak{e}\in EG}\ker \lambda _{\mathfrak{e}}=0$, by \ref%
{PropAnn}, $g^{-1}u=\varepsilon u$, that is, $gu=\varepsilon ^{-1}u$. Hence $%
g$ is a scalar automorphism, as wished.
\end{proof}

\section{The Petersen graph}

In this section we apply some of the above results to the Petersen graph, $P$
\cite{HS} (see \cite{BR} for data). Its vertices will be taken as the
2-element subsets of $\left\{ \mathbf{1,2,3,4,5}\right\} $, with $\left\{ 
\mathbf{i},\mathbf{j}\right\} $ abbreviated to $\mathbf{ij}$. Then $\mathbf{%
ij}\sim \mathbf{kl}$ just when the four indices are all different, and
we denote the corresponding edge by $\mathbf{ijkl}$. If $\mathbf{AB}$ is an
edge, where $\mathbf{A}$ and $\mathbf{B}$ are vertex pairs, then $\mathbf{BA}
$ is the same edge, and within $\mathbf{A}$ and $\mathbf{B}$ the two indices
can be switched. The symmetric group $\mathrm{Sym}(5)$ acts on $P$, and in
fact, $\mathrm{aut}P$ is isomorphic to it \cite[Theorem 4.6]{HS}.

The Peterson graph contains ten hexagons, all equivalent under $\mathrm{Sym}%
(5)$. A representative one has consecutive vertices $\mathbf{14},\mathbf{35},%
\mathbf{24},\mathbf{15},\mathbf{34},\mathbf{25}$. Its stabilizer is the
subgroup $\left\langle (123),\,\left( 12\right) ,\,\left( 45\right)
\right\rangle $. The three pairs of edges $\left\{ \mathbf{1435},\,\mathbf{%
2435}\right\} $, $\left\{ \mathbf{1435},\,\mathbf{1524}\right\} $, and $%
\left\{ \mathbf{1435},\,\mathbf{1534}\right\} $ represent the three orbits
of edge pairs: adjacent, skew, and opposite in a hexagon. Armed with this,
the proof of the following lemma is straight-forward (but perhaps tedious),
as are all the proofs from here on:

\begin{lemma}
\label{Lem15Wt3}Up to scalars, there are fifteen members $u$ of $U_{P}$ for
which\newline
$\mathrm{wt}\left( u^{2}\right) =3$: the ten vertices $\mathbf{ij}$ and the
five sums indicated in this table (the $-1$'s in $2u_{i}$ are at the
vertices showing an $\mathbf{i}$). The five $u_{i}$ form a $\mathrm{Sym}(5)$
orbit:%
\begin{equation}
\begin{tabular}{ccccccccccc}
& $\mathbf{12}$ & $\mathbf{13}$ & $\mathbf{14}$ & $\mathbf{15}$ & $\mathbf{23%
}$ & $\mathbf{24}$ & $\mathbf{25}$ & $\mathbf{34}$ & $\mathbf{35}$ & $%
\mathbf{45}$ \\ 
$2u_{1}$ & $-1$ & $-1$ & $-1$ & $-1$ & $1$ & $1$ & $1$ & $1$ & $1$ & $1$ \\ 
$2u_{2}$ & $-1$ & $1$ & $1$ & $1$ & $-1$ & $-1$ & $-1$ & $1$ & $1$ & $1$ \\ 
$2u_{3}$ & $1$ & $-1$ & $1$ & $1$ & $-1$ & $1$ & $1$ & $-1$ & $-1$ & $1$ \\ 
$2u_{4}$ & $1$ & $1$ & $-1$ & $1$ & $1$ & $-1$ & $1$ & $-1$ & $1$ & $-1$ \\ 
$2u_{5}$ & $1$ & $1$ & $1$ & $-1$ & $1$ & $1$ & $-1$ & $1$ & $-1$ & $-1$%
\end{tabular}
\label{EqnUi}
\end{equation}
\end{lemma}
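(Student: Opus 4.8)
The plan is to translate the condition $\mathrm{wt}(u^2)=3$ into a purely combinatorial statement about the coefficient vector $u=\sum_{\mathbf{ij}}\theta_{\mathbf{ij}}\,\mathbf{ij}$ and then carry out a finite case check. By equation (\ref{EqnProdLambda}), $u^2=\sum_{\mathfrak{e}}\lambda_{\mathfrak{e}}(u)^2\,\mathfrak{e}$, so $\mathrm{wt}(u^2)$ counts the edges $\mathfrak{e}=[\mathbf{ij},\mathbf{kl}]$ with $\theta_{\mathbf{ij}}+\theta_{\mathbf{kl}}\ne 0$; equivalently, the \emph{zero set} of $u$ is the set $Z=\{\mathfrak{e}:\theta_{\mathbf{ij}}+\theta_{\mathbf{kl}}=0\}$, and we want $|EP|-|Z|=3$, i.e.\ exactly $15-3=12$ edges satisfy $\theta_{\mathbf{ij}}+\theta_{\mathbf{kl}}=0$. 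First I would record that the three edges supporting $u^2$ form one of the three $\mathrm{Sym}(5)$-orbits of edge triples that can actually occur as $\mathrm{supp}(u^2)$ — and here the hexagon data quoted just before the lemma (orbits of \emph{pairs}: adjacent, skew, opposite-in-a-hexagon) is the tool for organizing the triples. So the first real step is to classify the possible $3$-element supports up to $\mathrm{Sym}(5)$.

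The core of the argument is then: for a given candidate support $\{\mathfrak{e}_1,\mathfrak{e}_2,\mathfrak{e}_3\}$, solve the linear system consisting of $\theta_{\mathbf{ij}}+\theta_{\mathbf{kl}}=0$ for the twelve complementary edges, and check whether the solution space, intersected with the open condition $\lambda_{\mathfrak{e}_i}(u)\ne0$ for $i=1,2,3$, is nonempty and, when it is, one-dimensional (so that ``up to scalars'' gives a single $u$). The twelve equations $\theta_{\mathbf{ij}}=-\theta_{\mathbf{kl}}$ partition the Petersen graph minus three edges into components on each of which the sign of $\theta$ is forced to alternate; a solution with $\lambda_{\mathfrak{e}_i}(u)\ne0$ exists precisely when this sign-assignment is consistent (no odd cycle survives among the twelve edges forcing a contradiction) and actually makes the three chosen edges ``non-alternating.'' This is exactly the bipartiteness bookkeeping of Proposition \ref{PropSqEdge}, now applied simultaneously to three edges rather than one. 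I expect that for the ``opposite'' and ``skew'' edge-triple types the twelve-edge subgraph still contains a forcing odd cycle, killing those cases, and only for the triple type realized by a vertex $\mathbf{ij}$ (its three incident edges) and by the $u_i$ of the table does a consistent assignment survive; one then reads off that the solution is unique up to scaling.

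Concretely I would verify the two explicit families directly. For a vertex $\mathbf{ij}$: $(\mathbf{ij})^2=\sum[\mathbf{ij},\mathbf{kl}]$ is the sum of its three incident edges, so $\mathrm{wt}((\mathbf{ij})^2)=\deg(\mathbf{ij})=3$ — immediate from (\ref{ProdDef}). For $u_i$ as in (\ref{EqnUi}): compute $\lambda_{\mathfrak{e}}(u_i)=\theta_{\mathbf{ij}}+\theta_{\mathbf{kl}}$ over all fifteen edges $\mathbf{ijkl}$ (recall $\mathbf{i}\notin\{\mathbf j\}$ etc.), observing that $2u_i$ assigns $-1$ to the four vertices containing $\mathbf{i}$ and $+1$ to the six not containing $\mathbf{i}$; an edge $\mathbf{ijkl}$ uses four distinct indices, so at most one of its two vertices contains $\mathbf i$, and the sum $\theta_{\mathbf{ij}}+\theta_{\mathbf{kl}}$ vanishes exactly for the twelve edges having \emph{exactly one} endpoint through $\mathbf i$, and is nonzero ($=\pm1$) for the three edges with \emph{neither} endpoint through $\mathbf i$ — those three being the edges of the triangle-free $3$-regular pattern on $\{\mathbf{jk},\mathbf{jl},\mathbf{kl},\dots\}$ not involving $\mathbf i$, i.e.\ the three edges among the six ``$\mathbf i$-free'' vertices. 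That gives $\mathrm{wt}(u_i^2)=3$. That $\mathrm{Sym}(5)$ permutes the $u_i$ transitively is clear since a permutation sending $\mathbf i\mapsto\mathbf i'$ sends the $\mathbf i$-defined pattern to the $\mathbf i'$-defined one. The remaining — and main — obstacle is the exhaustiveness claim: showing these fifteen are \emph{all}, which is the finite but genuinely case-heavy check of the previous paragraph; the hexagon orbit data is what keeps that check short, since the $u$ is determined on a hexagon by its values there and only three hexagon-relative configurations of the support arise.
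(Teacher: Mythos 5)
Your proposal follows the same route the paper intends: the paper offers no written proof beyond declaring the verification ``straight-forward (but perhaps tedious)'' given the $\mathrm{Sym}(5)$-orbit data on edge pairs, and your reduction of $\mathrm{wt}(u^{2})=3$ to the vanishing of $\theta_{\mathbf{ij}}+\theta_{\mathbf{kl}}$ on the twelve complementary edges, together with the bipartite sign-propagation argument of Proposition \ref{PropSqEdge} applied componentwise, is exactly that verification correctly organized. Your explicit checks of the two families are right (for $u_{i}$ the three surviving edges are precisely those with neither endpoint through $\mathbf{i}$, and their $\lambda$-values are $1$, not $\pm 1$), and the exhaustiveness step, which you leave as an orbit-by-orbit case check on edge triples, is carried out to essentially the same (incomplete) level of detail as in the paper itself.
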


Here are some representative products to which $\mathrm{Sym}(5)$ can be
applied to produce other products:%
\begin{equation}
\begin{tabular}{l}
$u_{1}^{2}=\mathbf{2345}+\mathbf{2435}+\mathbf{2534};$ \\ 
$u_{1}\times \mathbf{12}=0,\,u_{1}\times \mathbf{13}=0,\,u_{1}\times \mathbf{%
14}=0,\,u_{1}\times \mathbf{15}=0;$ \\ 
$u_{1}\times \mathbf{23}=\mathbf{2345},\,u_{1}\times \mathbf{45}=\mathbf{2345%
};$ \\ 
$u_{i}u_{j}=0,\,i\neq j.$%
\end{tabular}
\label{EqnUiProd}
\end{equation}%
Notice that $\mathbf{12}^{2}=\mathbf{1234}+\mathbf{1235}+\mathbf{1245}$, the
sum of the edges of the claw at $\mathbf{12}$, while $u_{1}^{2}$ is a sum of
three mutually opposite edges.

\begin{remark}
\label{RemEnNot}\emph{There is an enhanced notation that can be used here:
Label }$u_{i}$\emph{\ by }$\mathbf{i6},\,1\leq i\leq 5$\emph{, and augment
each edge symbol by the pair }$\mathbf{k6}$\emph{, where }$\mathbf{k}$\emph{%
\ does not appear in the original four edge indices. The earlier switching
rules extend to these symbols. Thus }$\mathbf{123456}=\mathbf{436512}$\emph{%
. Then the products involving the }$u_{i}$\emph{\ are obtained just as for }$%
\mathcal{N}P$\emph{\ itself. The products in (\ref{EqnUiProd}) now read:}%
\begin{equation}
\begin{tabular}{c}
$\mathbf{16}^{2}=\mathbf{162345}+\mathbf{162435}+\mathbf{162534};$ \\ 
$\mathbf{16}\times \mathbf{12}=0,\,\mathbf{16}\times \mathbf{13}=0,\,\mathbf{%
16}\times \mathbf{14}=0,\mathbf{16}\times \mathbf{15}=0;$ \\ 
$\mathbf{16}\times \mathbf{23}=\mathbf{162345},\,\mathbf{16}\times \mathbf{45%
}=\mathbf{162345};$ \\ 
$\mathbf{i6}\times \mathbf{j6}=0,\,i\neq j.$%
\end{tabular}
\label{Eqi6Prod}
\end{equation}%
\emph{We'll continue to use this notation.}
\end{remark}

There are six 5-element sets of the spans of these fifteen members of $U_{P}$
with the property that the products from any two different spans of the set
are $0.$They are%
\begin{equation}
\begin{tabular}{cc}
$\mathcal{F}_{1}$ & $\left\langle \mathbf{12}\right\rangle \ \left\langle 
\mathbf{13}\right\rangle \ \left\langle \mathbf{14}\right\rangle \
\left\langle \mathbf{15}\right\rangle \ \left\langle \mathbf{16}%
\right\rangle $ \\ 
$\mathcal{F}_{2}$ & $\left\langle \mathbf{12}\right\rangle \ \left\langle 
\mathbf{23}\right\rangle \ \left\langle \mathbf{24}\right\rangle \
\left\langle \mathbf{25}\right\rangle \ \left\langle \mathbf{26}%
\right\rangle $ \\ 
$\mathcal{F}_{3}$ & $\left\langle \mathbf{13}\right\rangle \ \left\langle 
\mathbf{23}\right\rangle \ \left\langle \mathbf{34}\right\rangle \
\left\langle \mathbf{35}\right\rangle \ \left\langle \mathbf{36}%
\right\rangle $ \\ 
$\mathcal{F}_{4}$ & $\left\langle \mathbf{14}\right\rangle \ \left\langle 
\mathbf{24}\right\rangle \ \left\langle \mathbf{34}\right\rangle \
\left\langle \mathbf{45}\right\rangle \ \left\langle \mathbf{46}%
\right\rangle $ \\ 
$\mathcal{F}_{5}$ & $\left\langle \mathbf{15}\right\rangle \ \left\langle 
\mathbf{25}\right\rangle \ \left\langle \mathbf{35}\right\rangle \
\left\langle \mathbf{45}\right\rangle \ \left\langle \mathbf{56}%
\right\rangle $ \\ 
$\mathcal{F}_{6}$ & $\left\langle \mathbf{16}\right\rangle \ \left\langle 
\mathbf{26}\right\rangle \ \left\langle \mathbf{36}\right\rangle \
\left\langle \mathbf{46}\right\rangle \ \left\langle \mathbf{56}%
\right\rangle $%
\end{tabular}
\label{EqnFi}
\end{equation}%
The group $\mathrm{aut}\mathcal{NP}$ permutes these six sets, with $\mathrm{%
Sym}(5)$ (as $\mathrm{aut}P$) permuting the first five by the subscripts. $%
\mathrm{Sym}(5)$ stabilizes $\mathcal{F}_{6}$, in which it permutes the $%
\left\langle \mathbf{k6}\right\rangle $ by the $k$'s.

\begin{proposition}
\label{PropFixFi}Suppose that $g\in \mathrm{aut}\mathcal{N}P$ fixes each $%
\mathcal{F}_{i}$. Then $g$ is a scalar automorphism.
\end{proposition}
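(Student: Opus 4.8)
The plan is to use the combinatorial structure of the six families $\mathcal{F}_i$ in (\ref{EqnFi}) to pin down $g$ on every edge span of $\mathcal{N}P$, then invoke Proposition \ref{PropEdgeScal}. First I would observe that each $\langle\mathbf{ij}\rangle$ with $1\le i<j\le 5$ is a genuine edge span of $P$ (weight $1$), while each $\langle\mathbf{k6}\rangle=\langle u_k\rangle$ has $\mathrm{wt}(u_k^2)=3$ by Lemma \ref{Lem15Wt3}; since $g$ preserves weights, $g$ must send the ten "true" edge spans among themselves and likewise fix the set $\{\langle u_1\rangle,\dots,\langle u_5\rangle\}$ setwise. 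Because $g$ fixes each $\mathcal{F}_i$, and $\mathcal{F}_6$ is exactly $\{\langle\mathbf{16}\rangle,\dots,\langle\mathbf{56}\rangle\}$, the permutation $g$ induces on the five spans $\langle u_k\rangle$ is recorded by a permutation $\sigma\in\mathrm{Sym}(5)$ of the indices $k$. Now the key combinatorial point: for $1\le i<j\le 5$, the span $\langle\mathbf{ij}\rangle$ lies in precisely two of the first five families, namely $\mathcal{F}_i$ and $\mathcal{F}_j$. Since $g$ fixes each $\mathcal{F}_m$, it must fix the (unordered) pair of families containing $\langle\mathbf{ij}\rangle$, and hence fix $\langle\mathbf{ij}\rangle$ itself — here I would note that no two distinct "true" edge spans lie in the same pair $\{\mathcal{F}_i,\mathcal{F}_j\}$, so the assignment $\langle\mathbf{ij}\rangle\mapsto\{i,j\}$ is injective and $g$ is forced to be the identity on all ten of these spans.

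Next I would transfer this rigidity to the $u_k$. The relevant relation is the incidence between the $u_k$ and the true edges, as encoded in (\ref{EqnUiProd}): for each $k$, $u_k^2$ is a sum of three mutually opposite edges, and $u_k\times\mathbf{ij}=0$ exactly when $k\in\{i,j\}$. Equivalently, $\mathrm{supp}(u_k^2)=\{\mathbf{e}\in EP : \lambda_{\mathbf{e}}(u_k)\ne 0\}$ consists of the three edges of $P$ disjoint from both elements of their label — the edges $\mathbf{ijkl}$ with $k\notin\{i,j,k,l\}$ in the six-index notation. Applying $g$ and using that $g$ fixes every true edge span, $\mathrm{supp}((gu_k)^2)=g\,\mathrm{supp}(u_k^2)=\mathrm{supp}(u_k^2)$; but among the five spans $\langle u_1\rangle,\dots,\langle u_5\rangle$, the supports of the squares are five distinct triples of edges (they form a $\mathrm{Sym}(5)$-orbit with trivial "extra" stabilizer beyond the point stabilizer), so $gu_k\in\langle u_k\rangle$ for each $k$. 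Therefore $\sigma$ is trivial and $g$ fixes all fifteen spans in Lemma \ref{Lem15Wt3}; in particular $g$ scales each of the fifteen edges $\mathbf{e}$, since $g\lambda_{\mathbf{e}}\approx\lambda_{\mathbf{e}}$ for each true edge $\mathbf{e}$ of $P$.

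Having shown that $g$ is edge-scaling on $P$, the final step is to verify the hypotheses of Proposition \ref{PropEdgeScal}. The Petersen graph is not bipartite (it has $5$-cycles) and has no isolated vertices, so $\mathrm{ann}\,P=0$ by Proposition \ref{PropAnn} together with Corollary \ref{CorAnn}; and $P$ is $\mathcal{MC}$-edge connected because any two of its edges lie on a common hexagon (the ten hexagons being a single $\mathrm{Sym}(5)$-orbit covering all edges, with consecutive hexagons overlapping in edges), and each hexagon is an even cycle, hence minimally coherent by Theorem \ref{ThmMinCo}. Proposition \ref{PropEdgeScal} then gives that $g$ is a scalar automorphism, as claimed.

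I expect the main obstacle to be the second paragraph: making precise that the five triples $\mathrm{supp}(u_k^2)$ are pairwise distinct and carry no "accidental" symmetry, so that a weight-preserving automorphism fixing all true edge spans cannot nontrivially permute the $\langle u_k\rangle$. This is really a small orbit-stabilizer computation inside $\mathrm{Sym}(5)$ using the explicit incidence data in (\ref{EqnUiProd}) and Remark \ref{RemEnNot}; it is routine but is the step where one must actually use the Petersen-specific numerics rather than formal properties of normal algebras. Everything else is bookkeeping with the families $\mathcal{F}_i$ and an appeal to the already-proved Proposition \ref{PropEdgeScal}.
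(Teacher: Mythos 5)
Your proof reaches the right conclusion by essentially the paper's route: the load-bearing step is that $\left\langle \mathbf{ij}\right\rangle =\mathcal{F}_{i}\cap \mathcal{F}_{j}$, so an automorphism fixing every $\mathcal{F}_{m}$ setwise fixes each of the fifteen spans individually, is therefore edge-scaling, and Proposition \ref{PropEdgeScal} finishes the argument (your explicit verification of its hypotheses for $P$ --- connectivity, non-bipartiteness, $\mathcal{MC}$-edge connectivity via the hexagons --- is a welcome addition, since the paper asserts them without detail). However, your opening claim is wrong: the $\left\langle \mathbf{ij}\right\rangle $ with $1\leq i<j\leq 5$ are \emph{vertex} spans, not edge spans, and $\mathrm{wt}\left( \mathbf{ij}^{2}\right) =3$, not $1$ (for example $\mathbf{12}^{2}$ is the sum of the three claw edges at $\mathbf{12}$); all fifteen elements of Lemma \ref{Lem15Wt3} have weight-$3$ squares, which is the entire point of that lemma. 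Consequently your assertion that weight preservation alone forces $g$ to send the ten $\left\langle \mathbf{ij}\right\rangle $, $i,j\leq 5$, among themselves and the five $\left\langle u_{k}\right\rangle $ among themselves is false --- indeed the automorphism $t$ constructed in the theorem immediately following this proposition sends $\mathbf{1k}$ to $\mathbf{k6}$, mixing the two classes while preserving weights.

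Fortunately that false claim is never needed. The hypothesis that $g$ fixes each $\mathcal{F}_{i}$, combined with the intersection observation, already pins down all fifteen spans, including $\left\langle u_{k}\right\rangle =\mathcal{F}_{k}\cap \mathcal{F}_{6}$, so your entire second paragraph (the support computation designed to show $gu_{k}\in \left\langle u_{k}\right\rangle $) is superfluous; moreover, fixing the $\left\langle u_{k}\right\rangle $ is not even required for the conclusion. Once the ten vertex spans are fixed, $g$ scales every edge automatically, because each edge $\left[ x,y\right] $ equals the product $xy$ of its two vertices, so $g\left[ x,y\right] =(gx)(gy)$ is a scalar multiple of $\left[ x,y\right] $. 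I would delete the weight argument and the support computation, keep the intersection step and the appeal to Proposition \ref{PropEdgeScal}, and replace the citation of Corollary \ref{CorAnn} (which concerns connected \emph{bipartite} graphs) by the observation, from the discussion preceding Proposition \ref{PropIncRank}, that a connected non-bipartite graph has $\dim \mathrm{ann}\,G=0$.
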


\begin{proof}
From $\left\langle \mathbf{ij}\right\rangle =\mathcal{F}_{i}\cap \mathcal{F}%
_{j}$ we infer that $g$ fixes each vertex, up to scalars. Then $g$ is
edge-scaling. Since $P$ satisfies the hypotheses of Proposition \ref%
{PropEdgeScal}, $g$ is scalar. (In fact, this conclusion just needs $P$
connected.)
\end{proof}

As a result, we have a homomorphism of $\mathrm{aut}\mathcal{N}P$ into $%
\mathrm{Sym}(6)$, with kernel $\mathbb{F}_{P}^{\#}$.

\begin{theorem}
For the Petersen graph, $\mathrm{aut}\mathcal{N}P/\mathbb{F}_{P}^{\#}$ is
isomorphic to $\mathrm{Sym}(6)$ acting on $\left\{ \mathcal{F}_{1},\ldots ,%
\mathcal{F}_{6}\right\} $.
\end{theorem}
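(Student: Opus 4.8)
The plan is to establish that the homomorphism $\Phi:\mathrm{aut}\,\mathcal{N}P\to\mathrm{Sym}(6)$ constructed from the permutation action on $\{\mathcal{F}_1,\ldots,\mathcal{F}_6\}$ is both well-defined with kernel exactly $\mathbb{F}_P^{\#}$ and surjective; the isomorphism then follows from the first isomorphism theorem. The kernel computation is already done: Proposition~\ref{PropFixFi} tells us that any $g$ fixing every $\mathcal{F}_i$ is scalar, and conversely scalar automorphisms fix every span $\langle\mathbf{ij}\rangle$ and hence every $\mathcal{F}_i$. So the content that remains is (a) confirming that $\mathrm{aut}\,\mathcal{N}P$ really does permute the six sets $\mathcal{F}_i$ as sets, and (b) surjectivity onto $\mathrm{Sym}(6)$.

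For (a), I would argue as follows. Any $g\in\mathrm{aut}\,\mathcal{N}P$ preserves weights (from Section~\ref{SectAut}), so it permutes the fifteen spans $\langle u\rangle$ with $\mathrm{wt}(u^2)=3$ classified in Lemma~\ref{Lem15Wt3}. The six families $\mathcal{F}_i$ are defined purely algebraically: $\mathcal{F}_i$ is a maximal set of five such spans that are pairwise ``orthogonal'' in the sense that the product of elements from two distinct members is $0$. Since $g$ is an algebra automorphism, it carries such a configuration to another one, so it permutes the set $\{\mathcal{F}_1,\ldots,\mathcal{F}_6\}$ — provided these six are genuinely all of them. That last point (the list in~(\ref{EqnFi}) is complete) is exactly the kind of finite check flagged as ``straight-forward (but perhaps tedious)''; one verifies that among the $\binom{15}{5}$ five-subsets, using the product data in~(\ref{EqnUiProd})/(\ref{Eqi6Prod}), precisely these six are pairwise-null, most efficiently by noting that the enhanced $\mathbf{k6}$ notation makes all fifteen objects ``vertices'' $\mathbf{ij}$ with $1\le i<j\le 6$, two being null-multiplying iff the index pairs are disjoint, so a pairwise-null 5-set is a maximal set of pairwise-disjoint pairs from a 6-element set, i.e.\ the stars and... actually these are exactly the six ``stars'' $\{\mathbf{ik}:k\ne i\}$, one per index $i\in\{1,\ldots,6\}$. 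This reframing makes completeness immediate and simultaneously shows the natural $\mathrm{Sym}(6)$-action.

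For surjectivity, $\mathrm{aut}P\cong\mathrm{Sym}(5)$ already sits inside $\mathrm{aut}\,\mathcal{N}P$ as graphical automorphisms, and its image in $\mathrm{Sym}(6)$ permutes $\mathcal{F}_1,\ldots,\mathcal{F}_5$ transitively via the subscripts while fixing $\mathcal{F}_6$ — so the image contains a copy of $\mathrm{Sym}(5)$ acting as the point-stabilizer of $\mathbf{6}$. To get all of $\mathrm{Sym}(6)$ it suffices to exhibit one element of $\mathrm{aut}\,\mathcal{N}P$ whose image moves $\mathcal{F}_6$; together with the $\mathrm{Sym}(5)$ this generates $\mathrm{Sym}(6)$ (a transitive group containing a point-stabilizer isomorphic to $\mathrm{Sym}(n-1)$ is all of $\mathrm{Sym}(n)$). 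The natural candidate is precisely the ``outer-automorphism swap'' hinted at in the abstract: a nongraphical algebra automorphism that interchanges the role of the ten vertices $\mathbf{ij}$ ($i,j\le 5$) with the ten ``new'' objects — concretely, an automorphism carrying the configuration $\mathcal{F}_6$ to, say, $\mathcal{F}_1$. One constructs it by specifying where the $u_i$ and the $\mathbf{ij}$ go (using the enhanced notation, a permutation of the index set $\{1,\ldots,6\}$ that does not fix $\mathbf{6}$, lifted to the objects $\mathbf{ij}$ and then extended to edges via the product relations) and checking it respects all products in~(\ref{EqnUiProd})/(\ref{Eqi6Prod}); the symmetry of the enhanced notation under $\mathrm{Sym}(6)$ is exactly what guarantees this works.

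The main obstacle is the surjectivity step — specifically, producing and validating the single extra automorphism that moves $\mathcal{F}_6$. The kernel and the ``permutes the $\mathcal{F}_i$'' parts are bookkeeping once the enhanced notation is in hand; but constructing a bona fide nongraphical automorphism of $\mathcal{N}P$ requires verifying that a proposed linear map on the $20$-dimensional space $U_P$ (or on a spanning set of $\mathcal{N}P$) is multiplicative on all products, not merely on the convenient sample in~(\ref{EqnUiProd}). The efficient route is to lean entirely on the $\mathbf{ijkl}\leftrightarrow\mathbf{ijklm6}$ symmetrization: once one shows that $\mathcal{N}P$ together with its weight-$3$ elements forms a structure on which $\mathrm{Sym}(6)$ acts (the objects being the $\binom{6}{2}=15$ ``pairs'', edges being triples-of-disjoint-pairs), every $g\in\mathrm{Sym}(6)$ that does not stabilize $\mathbf{6}$ furnishes the needed automorphism, and its multiplicativity is automatic from the $\mathrm{Sym}(6)$-invariance of the defining relations. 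This both closes the proof and makes transparent why ``the outer automorphisms of $\mathrm{Sym}(6)$ emerge naturally.''
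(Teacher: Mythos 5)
Your strategy is essentially the paper's: the kernel is Proposition \ref{PropFixFi}, the graphical automorphisms supply a $\mathrm{Sym}(5)$ stabilizing $\mathcal{F}_{6}$, and everything reduces to producing one automorphism of $\mathcal{N}P$ that moves $\mathcal{F}_{6}$. The map you describe---a permutation of the index set $\{1,\ldots ,6\}$ not fixing $\mathbf{6}$, lifted to the fifteen weight-$3$ objects and extended to edges---is exactly the paper's $t$, which is the transposition $(16)$ in the enhanced notation and swaps $\mathcal{F}_{1}$ with $\mathcal{F}_{6}$. One slip in your classification step: two of the fifteen objects multiply to $0$ precisely when their index pairs \emph{intersect} (nonadjacent vertices of $P$ share an index, $u_{i}\times \mathbf{jk}=0$ exactly when $i\in \{j,k\}$, and $u_{i}u_{j}=0$ because $\{i,6\}$ meets $\{j,6\}$), not when they are disjoint; a maximal pairwise-disjoint family of duads would be a syntheme of size three, not five. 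A pairwise-\emph{intersecting} family of five duads must be a star, which is the conclusion you land on, so the list (\ref{EqnFi}) is indeed complete---but the completeness is what makes the action on $\{\mathcal{F}_{1},\ldots ,\mathcal{F}_{6}\}$ well defined, so the statement should be corrected.

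The more substantive gap is the claim that multiplicativity of the lifted permutation is ``automatic from the $\mathrm{Sym}(6)$-invariance of the defining relations.'' The fifteen objects $\mathbf{ij}$ form a \emph{dependent} spanning set of the ten-dimensional space $U_{P}$ (the six star-sums $\sum_{k\neq i}\mathbf{ik}$ all coincide, giving five independent linear relations), so a permutation of these objects does not automatically extend to a linear map on $U_{P}$: you must check that the permutation is compatible with those linear relations. That is exactly the check the paper performs when it defines $t$ on an honest basis ($\mathbf{1k}$ for $2\leq k\leq 5$ together with $\mathbf{ij}$ for $2\leq i<j\leq 5$) and then verifies that the remaining object $\mathbf{16}$, a specific linear combination of basis vectors, goes where the enhanced notation demands (here, that it is fixed). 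Once well-definedness is settled, your point does carry the rest: the products on the spanning set, as recorded in (\ref{EqnUiProd}) and (\ref{Eqi6Prod}), are zero when the duads meet and otherwise equal the syntheme containing both, a description visibly preserved by any permutation of the six indices, and each edge arises consistently from the three duad pairs inside its syntheme. So the gap is fillable, and filling it lands you precisely on the paper's proof.
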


\begin{proof}
To prove this, we must find an additional automorphism outside of $\mathrm{%
Sym}(5)$ that produces a transposition. Define $t$ as follows; the members
of $U_{P}$ whose images are shown form a basis of $U_{P}$:%
\[
t:\mathbf{1k}\longrightarrow \mathbf{k6},\,2\leq k\leq 5;\,\mathbf{ij}%
\longrightarrow \mathbf{ij},\,2\leq i<j\leq 5.
\]%
Since $\sum_{k=2}^{5}\mathbf{1k}=\sum_{k=2}^{5}\mathbf{k6}$, $t$ fixes the
common sum and so fixes $\mathbf{16}$, which is $-\frac{1}{2}\sum_{k=2}^{5}%
\mathbf{1k}+\frac{1}{2}\sum_{2\leq i<j\leq 5}\mathbf{ij}$. Moreover, $t^{2}$
is the identity, so that $\mathbf{k6}\longrightarrow \mathbf{1k}$ for $2\leq
k\leq 5$. The induced map of the edges can be described this way: first, $t$
fixes any edge showing $\mathbf{16}$ as one of its three pairs. For an edge $%
\mathbf{1hijk6}$, $t\mathbf{1hijk6}=\mathbf{1kijh6}$; for instance, $t%
\mathbf{123456}=\mathbf{153426}$. It is routine to verify that $t$, with
these edge images, is an automorphism. (The fact that $t$ commutes with the
members of $\mathrm{Sym}(5)$ fixing $\mathbf{1}$ simplifies the work.) The
effect of $t$ on the $\mathcal{F}_{i}$ is that $t$ switches $\mathcal{F}_{1}$
and $\mathcal{F}_{6}$ and fixes the others. This $t$ is just the
transposition we need.
\end{proof}

In the notation of Remark \ref{RemEnNot}, $t$ is the transposition $(16)$.
Moreover, $\left\langle \mathrm{aut}P,t\right\rangle $ is isomorphic to $%
\mathrm{Sym}(6)$, and $\mathrm{aut}\mathcal{N}P$ is isomorphic to $\mathbb{F}%
^{\#}\times \mathrm{Sym}(6)$.

The group $\mathrm{Sym}(6)$ famously has outer automorphisms \cite{JR}, and
they are hiding in all this. There are six pairs of disjoint pentagons in $P$%
, and each pair occurs in five minimally coherent subgraphs of size 11. Two
pentagons share at most two edges, so that two such subgraphs involving
different pairs of pentagons meet in at most nine edges. So different
minimally coherent subgraphs with the same pentagon pair meet in that pair,
with its ten edges. By this count, the edge span sets of the pentagon pairs
can be identified from $\mathcal{N}P$, as can then the sets of five edge
spans showing the 11th edges of the subgraphs. The edges of such a set form
a 1-factor of $P$.

We obtain six sets $\mathcal{H}_{1},\ldots ,\mathcal{H}_{6}$ of five edge
spans. They are permuted by $\mathrm{aut}\mathcal{N}P$ (recorded on the
subscripts). The spans given are cycled by $\left( 12345\right) $ from $%
\mathrm{aut}P$, with $\mathcal{H}_{6}$ fixed.%
\begin{equation}
\begin{tabular}{llllll}
$\mathcal{H}_{1}$ & $\left\langle \mathbf{123456}\right\rangle $ & $%
\left\langle \mathbf{132546}\right\rangle $ & $\left\langle \mathbf{143526}%
\right\rangle $ & $\left\langle \mathbf{152436}\right\rangle $ & $%
\left\langle \mathbf{162345}\right\rangle $ \\ 
$\mathcal{H}_{2}$ & $\left\langle \mathbf{123546}\right\rangle $ & $%
\left\langle \mathbf{132456}\right\rangle $ & $\left\langle \mathbf{142536}%
\right\rangle $ & $\left\langle \mathbf{153426}\right\rangle $ & $%
\left\langle \mathbf{162345}\right\rangle $ \\ 
$\mathcal{H}_{3}$ & $\left\langle \mathbf{124536}\right\rangle $ & $%
\left\langle \mathbf{132546}\right\rangle $ & $\left\langle \mathbf{142356}%
\right\rangle $ & $\left\langle \mathbf{153426}\right\rangle $ & $%
\left\langle \mathbf{162435}\right\rangle $ \\ 
$\mathcal{H}_{4}$ & $\left\langle \mathbf{124536}\right\rangle $ & $%
\left\langle \mathbf{132456}\right\rangle $ & $\left\langle \mathbf{143526}%
\right\rangle $ & $\left\langle \mathbf{152346}\right\rangle $ & $%
\left\langle \mathbf{162534}\right\rangle $ \\ 
$\mathcal{H}_{5}$ & $\left\langle \mathbf{123456}\right\rangle $ & $%
\left\langle \mathbf{134526}\right\rangle $ & $\left\langle \mathbf{142536}%
\right\rangle $ & $\left\langle \mathbf{152346}\right\rangle $ & $%
\left\langle \mathbf{162435}\right\rangle $ \\ 
$\mathcal{H}_{6}$ & $\left\langle \mathbf{123546}\right\rangle $ & $%
\left\langle \mathbf{134526}\right\rangle $ & $\left\langle \mathbf{142356}%
\right\rangle $ & $\left\langle \mathbf{152436}\right\rangle $ & $%
\left\langle \mathbf{162534}\right\rangle $%
\end{tabular}
\label{EqnHi}
\end{equation}%
Once again, we have an isomorphism of $\mathrm{aut}\mathcal{N}P/\mathbb{F}%
_{P}^{\#}$ with $\mathrm{Sym}(6)$, but now acting on $\left\{ \mathcal{H}%
_{1},\ldots ,\mathcal{H}_{6}\right\} $.

From the actions on the $\mathcal{F}_{i}$ and the $\mathcal{H}_{i}$,
composition by way of $\mathrm{aut}\mathcal{N}P/\mathbb{F}_{P}^{\#}$
produces an automorphism of $\mathrm{Sym}(6)$. For instance, the
transposition $\left( 12\right) $ on the $\mathcal{F}_{i}$ effects $%
(15)(26)(34)$ on the $\mathcal{H}_{i}$, and $(123)$ produces $(164)(253)$.
The automorphism is indeed outer. The enhanced notation shows that a
duad-syntheme correspondence, suggested by J. J. Sylvester and described in 
\cite{C}, is present in (\ref{EqnHi}). Each edge appears in two rows, and
matching the row index pair with the edge sets up such a correspondence. For
example, $\mathbf{12}\longleftrightarrow \mathbf{162345}$ and $\mathbf{36}%
\longleftrightarrow \mathbf{142356}$. An outer automorphism also provides a
correspondence, matching transpositions with triple transpositions. For more
on this topic, see \cite[Chapter 6.]{CvL}.

We end with a uniqueness theorem for the algebra $\mathcal{N}P$, the first
such result in this paper. The sequel will contain other uniqueness theorems.

\begin{theorem}
\label{ThmNPUnique}Let $Q$ be a second graph of order $10$ and size $15$ for
which $\mathcal{N}P$ and $\mathcal{N}Q$ are isomorphic normal algebras. Then
the graphs $P$ and $Q$ are isomorphic.
\end{theorem}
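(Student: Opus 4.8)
The plan is to reconstruct $Q$ from the abstract isomorphism type of $\mathcal{N}Q\cong\mathcal{N}P$ by locating the vertices of $Q$ among the fifteen weight-$3$ square classes and exploiting the duad/syntheme incidence pattern that Lemma \ref{Lem15Wt3} and the products (\ref{EqnUiProd}) already display in $\mathcal{N}P$. First I would list the invariants available. An isomorphism $\Phi\colon\mathcal{N}P\to\mathcal{N}Q$ of normal algebras carries $U_P$ onto $U_Q$ and $\mathfrak{Z}_P$ onto $\mathfrak{Z}_Q$, sends weight-$k$ $\mathfrak{Z}$-classes to weight-$k$ $\mathfrak{Z}$-classes (so edges to edges, weight-$3$ squares to weight-$3$ squares), and matches supports of squares; in particular $Q$ again has $p=10$, $q=15$. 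A short check shows that no nonzero $u\in U_P$ has $\mathrm{wt}(u^2)\le 2$: $P$ has no bridge and $P-\mathfrak e$ always contains a $5$-cycle hence is never bipartite, so by Proposition \ref{PropSqEdge} no edge is a square; and if $\mathrm{supp}(u^2)=\{\mathfrak e,\mathfrak f\}$ then, writing $u=\sum\theta_x x$, the relation $\theta_x+\theta_y=0$ along every edge of the (connected) graph $P-\{\mathfrak e,\mathfrak f\}$ would make that graph bipartite, again contradicting the presence of a $5$-cycle. This minimality of weight is an algebra invariant, so it holds in $\mathcal{N}Q$ too; since $\mathrm{wt}(x^2)=\deg x$ for a vertex $x\in VQ$, every vertex of $Q$ has degree $\ge 3$, and $\sum_x\deg x=2q=30$ forces $Q$ to be $3$-regular. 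Hence each $x\in VQ$ yields a weight-$3$ class $\langle x\rangle$, distinct vertices give distinct classes, and $VQ$ accounts for ten of the fifteen weight-$3$ classes of $\mathcal{N}Q$.

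Next I would transport the combinatorial picture. By Lemma \ref{Lem15Wt3} there are exactly fifteen weight-$3$ classes, and from (\ref{EqnUiProd}) in the enhanced notation of Remark \ref{RemEnNot} the incidence relation ``$\mathfrak g\in\mathrm{supp}(w^2)$'' between weight-$3$ classes and the fifteen edges is precisely the duad/syntheme incidence of $\{\mathbf 1,\dots,\mathbf 6\}$: the fifteen classes correspond to the fifteen duads (the ten vertices $\mathbf{ij}$ and the five $u_k=\mathbf{k6}$), the fifteen edges to the fifteen synthemes, and a class is incident to an edge exactly when the duad is one of the three pairs of the syntheme; in particular every edge is incident to exactly three classes. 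All of this is preserved by $\Phi$, so $\mathcal{N}Q$ carries the same incidence structure, and the six five-element pairwise-annihilating sets of weight-$3$ classes of (\ref{EqnFi}) correspond in $\mathcal{N}Q$ to the six ``stars'' of duads through a common point. Now for an edge $\mathfrak g$ of $Q$, the vertex-classes incident to it in $\mathcal{N}Q$ are exactly the two endpoints of $\mathfrak g$; since $\mathfrak g$ is incident to exactly three classes in all, precisely one of these three is a ``ghost'', that is, a weight-$3$ class not of the form $\langle x\rangle$ with $x\in VQ$.

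Finally I would pin down the ghosts. If two ghost classes $\langle w\rangle,\langle w'\rangle$ had $ww'\ne 0$, then $\mathrm{supp}(w^2)\cap\mathrm{supp}(w'^2)=\mathrm{supp}(ww')\ne\emptyset$ by (\ref{EqnSprt}) would furnish an edge incident to two ghosts, contradicting the previous paragraph; so the five ghost classes are pairwise annihilating and therefore form one of the six families, which after relabeling we take to be the $\mathcal{N}Q$-analogue of $\mathcal{F}_6$, the star of duads through $\mathbf 6$. Translating back through the dictionary: $VQ$ is the set of ten duads of $\{\mathbf 1,\dots,\mathbf 5\}$, $EQ$ is the set of fifteen synthemes of $\{\mathbf 1,\dots,\mathbf 6\}$, and a vertex lies on an edge iff the duad is one of the pairs of the syntheme — which is exactly the description of $P$ in Remark \ref{RemEnNot}. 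Hence any bijection of $\{\mathbf 1,\dots,\mathbf 5\}$ with itself induces a graph isomorphism $Q\to P$.

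The main obstacle I expect is not any one computation but the bookkeeping in the middle step: one must verify carefully that the weight-$3$/edge incidence of $\mathcal{N}P$ really is the duad/syntheme configuration and that this pattern, together with the classification of the five-element pairwise-annihilating sets as the six families of (\ref{EqnFi}), is faithfully transported by $\Phi$. The conceptual point to keep in mind is that the abstract algebra does not mark which ten of the fifteen weight-$3$ classes are vertices — indeed $\mathrm{aut}\,\mathcal{N}P/\mathbb{F}^{\#}_P\cong\mathrm{Sym}(6)$ mixes the ten claw-squares with the five $u_i$ — so the argument cannot identify ``the'' vertices intrinsically; instead it must show that whatever ten classes happen to be the vertices of $Q$, the complementary five are forced to be pairwise annihilating, hence one of the six stars, and that this alone already determines $Q$ up to isomorphism.
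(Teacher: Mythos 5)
Your proposal is correct and follows essentially the same route as the paper's proof: rule out squares of weight $1$ and $2$ to force $Q$ cubic, identify $VQ$ with ten of the fifteen weight-$3$ classes, show the remaining five pairwise annihilate (your ``one ghost per edge'' count is just a rephrasing of the paper's observation that every edge must be a product of two of the ten), conclude they form one of the six families of (\ref{EqnFi}), and read off the adjacency. The only substantive difference is that you spell out the weight-$\le 2$ verification via Proposition \ref{PropSqEdge} and the non-bipartiteness of $P$ minus one or two edges, where the paper merely calls it a coefficient-assignment check.
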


\begin{proof}
Let $\varphi :\mathcal{N}Q\longrightarrow \mathcal{N}P$ be an isomorphism.
As in Section \ref{SectAut}, the $\varphi $-images of edge spans and their
associated form and functional spans are that sort of item of $\mathcal{N}P$%
. First observe that $P$ has no 4-cycles: its smallest minimally coherent
subgraphs are hexagons. Then $Q$ also has no 4-cycles, since $\varphi $
would match them with 4-cycles in $P$. Each edge of $P$ is in a hexagon, so
that is true of $Q$. This implies that $Q$ is connected. Next, $\mathcal{N}P$
contains no $u$ with $\mathrm{wt}\left( u^{2}\right) =1$ or $2$. This is
again a coefficient-assignment problem (automorphisms of $\mathcal{N}P$ may
be exploited to allow checking just the edge $\mathbf{123456}$ and the pair $%
\mathbf{123456}$, $\mathbf{132456}$ for potential supports). It follows that
the minimum degree of $Q$ is at least $3$, so that in fact $Q$ is regular of
degree $3$ (cubic). In addition, the $\varphi $-images of the vertices must
be proportional to ten of the 15 members of the set $\left\{ \mathbf{ij}%
|1\leq i<j\leq 6\right\} $ from Lemma \ref{Lem15Wt3}.

Suppose that among the missing five there are two that are disjoint, such as 
$\mathbf{12}$ and $\mathbf{36}$. Their product $\mathbf{124536}$ is an edge
that must also appear as a product from the ten. But that would require two
of $\mathbf{12},\,\mathbf{36}$, and $\mathbf{45}$ to be among the ten also,
which can't be. Therefore the excluded five form one of the sets described
after Remark \ref{RemEnNot}. Composing $\varphi $ with a member of $\mathrm{%
aut}\mathcal{NP}$, if needed, allows us to assume that the five are $\mathbf{%
16},\mathbf{26},\mathbf{36},\mathbf{46}$, and $\mathbf{56}$. Then $\varphi $
maps the vertices of $Q$ to scalar multiples of the vertices of $P$,
inducing a map $\widetilde{\varphi }$ of vertices. For $x,y\in VQ$, $%
x\sim y$ just when $xy\neq 0$. That in turn requires $(\varphi
x)(\varphi y)\neq 0$, which is equivalent to $\widetilde{\varphi }x\sim 
\widetilde{\varphi }y$. So $\widetilde{\varphi }$ preserves adjacency and
establishes the hoped-for isomorphism between $Q$ and $P$.
\end{proof}

There are 19 cubic graphs of order 10, tabulated with pertinent data in \cite%
[pp. 13--14]{B} (they are drawn in \cite[p. 127]{RW} too, also with data).
Their graph algebras are mutually nonisomorphic. For suppose that $G$ and $H$
are two of these graphs with $\mathcal{N}G$ and $NH$ isomorphic. Then $G$
and $H$ have the same number of minimally coherent subgraphs of each size.
Those of size 4 are the 4-cycles. There are none of size 5, and of the two
possibilities of size 6, only the 6-cycle can appear, because the butterfly
graph has a vertex of degree 4. Thus $G$ and $H$ must have the same numbers
of 4-cycles and 6-cycles.

There are just two pairs of the graphs for which these numbers match. First,
\#14 and \#18 (C10 and C19 in \cite{RW}) have two 4-cycles and eight
6-cycles. But the number of edges appearing in all the 4-cycles of \#14 is
seven, and in \#18, eight. Similarly, \#12 and \#17 (C16 and C11 in \cite{RW}%
) have three 4-cycles and seven 6-cycles. Again the numbers of edges in the
4-cycles are different: nine for \#12 and 11 for \#17. So all the algebras
are indeed nonisomorphic.

In further investigations, 4-cycles will play a prominent role.


\begin{thebibliography}{99}
\bibitem{AH} S. Anderson and F. Harary. Trees and unicyclic graphs. \emph{%
Math. Teacher,} 60:345--348, 1967.

\bibitem{BI} N. Biggs. \emph{Algebraic Graph Theory.} Cambridge University
Press, Cambridge, 2nd ed., 1993.

\bibitem{BR} A. E. Brouwer. The Petersen graph.\newline
https://www.win.tue.nl/\symbol{126}aeb/drg/graphs/Petersen.html.

\bibitem{B} F. C. Bussemaker, S. \v{C}obelji\'{c}, D. M. Cvetkovi\'{c}, and
J. J. Seidel. \emph{Computer Investigation of Cubic Graphs}. T.H. Report
76-WSK-01. Department of Mathematics, Technological University Eindhoven,
The Netherlands, 1976.

\bibitem{CvL} P. J. Cameron and J. H. van Lint. \emph{Designs, Graphs, Codes
and their Links.} London Mathematical Society Student Texts 22, Cambridge
University Press, Cambridge, 1991.

\bibitem{CG} R. Costa and H. Guzzo, Jr. A class of exceptional Bernstein
algebras associated to graphs. \emph{Comm. Algebra}, 25:2129--2139, 1997.

\bibitem{C} H. S. M. Coxeter. Twelve points in PG(5,3) with 95040
self-transformations. \emph{Proc. Roy. Soc. London Ser. A},\emph{\ }%
247:279--293, 1958. (Reprinted in \emph{Twelve Geometric Essays. }Southern
Illinois University Press, Carbondale, Ill., 1968. Reprinted in turn as 
\emph{The Beauty of Geometry. }Dover Publications, Inc., Mineola, NY, 1999).

\bibitem{EP} P. Erd\H{o}s and L. P\'{o}sa. On the maximal number of disjoint
circuits of a graph. \emph{Publ. Math. Debrecen},\emph{\ }9:3--12, 1962.

\bibitem{GR} C. Godsil and G. Royle. \emph{Algebraic Graph Theory.} Graduate
Texts in Mathematics 207, Springer-Verlag, New York, 2001.

\bibitem{GLM} S. Gonz\'{a}lez, M. C. L\'{o}pez-D\'{\i}az, and C. Mart\'{\i}%
nez. Bernstein superalgebras and supermodules. \emph{J. Algebra},\emph{\ }%
212:119--131, 1999.

\bibitem{GC} A. Grishkov and R. Costa. Graphs and non-associative algebras. 
\emph{J. Math. Sci. (New York)}, 93:877--882, 1999.

\bibitem{HS} D. A. Holton and J. Sheehan. \emph{The Petersen Graph.}
Australian Mathematical Society Lecture Series no. 7, Cambridge University
Press, Cambridge, 1993.

\bibitem{JR} G. Janusz and J. Rotman. Outer automorphisms of $S_{6}$. \emph{%
Amer. Math. Monthly}, 89:407--410, 1982.

\bibitem{RW} R. C. Read and R. J. Wilson. \emph{An Atlas of Graphs. }Oxford
University Press, New York, 1998.

\bibitem{R} M. L. Reed. Algebraic structure of genetic inheritance. \emph{%
Bull. Amer. Math. Soc. (N.S.)}, 34:107--130, 1997.

\bibitem{SNA} R. Schafer. \emph{An Introduction to Nonassociative Algebras}.
Academic Press, New York, 1966.

\bibitem{S} N. J. A. Sloane (editor). \emph{The On-Line Encyclopedia of
Integer Sequences}, published electronically at https://oeis.org.

\bibitem{W} H. N. Ward. Bernstein graph algebras. \emph{Arch. Math. (Basel)}%
, 116:501--508, 2021.
\end{thebibliography}
\end{document}